\newcommand{\Mod}[1]{\ (\mathrm{mod}\ #1)}
\newtheorem{theorem}{\bf Theorem}[section]
\newtheorem{lemma}[theorem]{\bf Lemma}
\newtheorem{cor}[theorem]{\bf Corollary}
\newtheorem{problem}[theorem]{\bf Problem}
\newtheorem{nota}[theorem]{\bf Notation}
\newtheorem{remark}[theorem]{\bf Remark}
\newtheorem{defi}[theorem]{\bf Definition}
\newtheorem{conjecture}[theorem]{\bf Conjecture}
\title{Extending edge colorings of distance-3 matchings in the Cartesian product of graphs}
\date{}
\author{Pál Bärnkopf \thanks{Alfréd Rényi Institute of Mathematics, Budapest, Hungary. Partially supported by the Counting in Sparse Graphs Lendület Research Group.
E-mail: {\tt barpal@student.elte.hu}} %\todo{affiliáció/mail/támogató}}
\and 
Ervin Győri \thanks{Alfréd Rényi Institute of Mathematics, Budapest, Hungary, Partially supported by the National Research, Development and Innovation Office NKFIH, grants K132696 and SNN135643, 	E-mail: {\tt gyori@renyi.hu}}
}
\begin{document}

\maketitle

\begin{abstract}
We investigate the problem of extending partial edge colorings in Cartesian products of graphs, with a particular focus on cases where the precolored edges form a matching. Casselgren, Granholm, and Petros conjectured that any precolored distance-3 matching in $G = C^d_{2k}$ can be extended to a $2d$-edge coloring. In this paper, we prove a theorem that implies this conjecture. Especially, our main result establishes that a precolored distance-3 matching in the Cartesian product of certain class 1 graphs can be extended to an edge coloring that uses at most as many colors as the chromatic index, provided that certain degree conditions are satisfied. In the second part of the paper, we extend these results to Cartesian products of other types of graphs as well.

\textit{Keywords: Precoloring extension; Edge coloring; Bipartite graph; Cartesian product} 
\end{abstract}

\section{Introduction}

In this paper, we deal with proper edge colorings of graphs. Throughout the paper, instead of proper edge coloring, we often say just coloring. The edge chromatic number (or chromatic index) of a graph $G$ is denoted by $\chi'(G)$. According to Vizing's Theorem \cite{Vizing}, if $G$ is a simple graph with maximum degree $\Delta(G)$, then its chromatic index is either $\Delta(G)$ or $\Delta(G)+1$. Graphs with $\chi'(G)=\Delta(G)$ are called \textit{Class 1 graphs}. (In particular, bipartite graphs are proved to be Class 1 graphs.) An (edge) \textit{precoloring} (or partial edge coloring) of a graph $G$ is a proper edge coloring of some edge set $E_0 \subseteq E(G)$. 

We call a precoloring of some edge set $E_0$ \textit{extendable} if there is a proper edge coloring with $\chi'(G)$ colors such that the color of the edges in $E_0$ are the prescribed colors. Such a coloring is called an \textit{extension} of the precoloring.

Let $G$ be a graph, and let us prescribe some color for some edges of $G$. We call a (proper edge) coloring of $G$ with $\chi'(G)$ colors \textit{admissible} if it is an extension of the precoloring.

We recall that the problem of extending a given edge precoloring is an NP-hard problem, even for 3-regular bipartite graphs. \cite{Easton}, \cite{Fiala}

\begin{defi}
    \textup{The \textit{Cartesian product $G_1 \square G_2$ of graphs} $G_1=(V_1,E_1)$ and $G_2=(V_2,E_2)$ is a graph whose vertex set is the Cartesian product $V(G_1) \times V(G_2)$ and two vertices $(u_1,u_2)$ and $(v_1,v_2)$ are adjacent in $G_1 \square G_2$ if and only if either}
    \begin{itemize}
        \item \textup{$u_1=v_1$ and $u_2$ is adjacent to $v_2$ in $G_2$ or}
        \item \textup{$u_2=v_2$ and $u_1$ is adjacent to $v_1$ in $G_1$.}
    \end{itemize}
\end{defi}

\begin{nota}
    \textup{$G^d$ denotes the $d$-th power of the Cartesian product of $G$ with itself.}
\end{nota}

\begin{problem} \label{problem}
    Let $G$ and $H$ be bipartite (or Class 1) graphs with maximum degrees $\Delta(G)$ and $\Delta(H)$, respectively. We would like to color the edges of the Cartesian product $G \square H$ with $\Delta(G) + \Delta(H)$ colors, even if we prescribe the color of some edges. Under what conditions does the graph have an admissible coloring?
\end{problem}

A natural condition is that the precolored edges are far from each other. The problem of extending a partial edge coloring with distance constraints on the precolored edges goes back to the work of Albertson and Moore \cite{Albertson}.

\begin{defi}
    \textup{In a connected graph, the \textit{distance $d(x,y)$ between vertices} $x$ and $y$ is the length of the shortest $xy$ path and the \textit{distance $d(e,f)$ of edges} $e=xy$ and $f=zw$ is $\min \{d(x,z),d(x,w),d(y,z),d(y,w) \}$.}
\end{defi}

\begin{defi}
    \textup{The matching $M$ is a \textit{distance-k matching}, if for every $m_1, m_2 \in M: d(m_1,m_2) \geq k$.}
\end{defi}

Casselgren, Granholm, and Petros \cite{Casselgren} proved the following.

\begin{theorem} \label{distance-4}
    If $E_0 \subseteq E(G)$ is a distance-4 matching of $G = C^d_{2k}$, then any $2d$-coloring of $E_0$ can be extended to a proper $2d$-edge coloring of $G$.
\end{theorem}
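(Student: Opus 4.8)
\emph{Proof proposal.} Since $C_{2k}$ is bipartite, so is $G=C^d_{2k}$, hence $G$ is Class~1 and $\chi'(G)=2d$; in particular $2d$-colorings exist. The interesting range is $d\ge 2$, which we assume throughout: this is where the auxiliary direction used below comes from, and indeed for $d=1$ the $2$-coloring of $C_{2k}$ is essentially rigid, so some extra hypothesis on the matching would be needed. The plan is to fix one \emph{canonical} proper $2d$-edge-coloring of $G$ and then repair it, one precolored edge at a time, by recolorings confined to small neighborhoods. Identify $V(G)=(\mathbb Z_{2k})^d$; let the canonical coloring $\phi_0$ give the edges in coordinate direction $i$ the colors $2i-1$ and $2i$, alternating along each direction-$i$ copy of $C_{2k}$. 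For a precolored edge $e$, say of direction $i$, with prescribed color $c$, we must move its $\phi_0$-color to $c$, and we will do so by changing only edges at distance $\le 1$ from $e$. The reason this suffices globally is exactly the distance-$4$ hypothesis: if some edge lay in the repair zones of two precolored edges $e,f$, then $e$ and $f$ would be within distance $3$, so under the distance-$4$ assumption the repair zones are pairwise vertex-disjoint and all repairs may be carried out simultaneously and independently.

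For the repair of a single $e$ there are two cases. If $c\in\{2j-1,2j\}$ for some direction $j\neq i$, then the Kempe component of $e$ in $\phi_0$ on the color pair $\{c,\phi_0(e)\}$ is a single $4$-cycle --- the unique square $e\,\square\,f$ whose two direction-$j$ sides carry color $c$ under $\phi_0$ --- because along such a component both the $i$-coordinate and the $j$-coordinate merely alternate between two consecutive values. Swapping the two colors around this square turns $e$ into color $c$, touches only those four edges (all within distance $1$ of $e$), and preserves properness. The remaining case is $c=\{2i-1,2i\}\setminus\{\phi_0(e)\}$: here the analogous Kempe component is the whole direction-$i$ copy of $C_{2k}$ through $e$, which is not local, so a bare Kempe swap is disallowed. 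Instead one performs a bounded \emph{detour}: inside a patch of bounded diameter spanned by direction $i$ and an auxiliary direction $j\neq i$ (which exists since $d\ge 2$), one reroutes the direction-$i$ color classes, temporarily using colors $2j-1,2j$ inside the patch, so that $e$ flips while the propagation is absorbed within the patch and the coloring is restored to $\phi_0$ on the patch boundary. Brick-neighborhoods are exactly the right bookkeeping device for describing such a patch and its interface (internal versus external edges) with the rest of $G$. Equivalently, one may organize the whole argument as an induction on $d$ via $C^d_{2k}=C^{d-1}_{2k}\,\square\,C_{2k}$, coloring the $C^{d-1}_{2k}$-layers first and then the transversal cycles, the extra $C_{2k}$ factor being precisely what supplies the two spare colors the repairs need.

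I expect the main obstacle to be this second case: exhibiting the bounded detour and checking that a valid one exists for every boundary condition, that is, for every combination of the target color, the values of $\phi_0$ on the boundary of the patch, and the local structure around $e$. This is where the distance constraint is actually stressed --- with distance $4$ one has generous room to place and absorb the patch, whereas the sharper distance-$3$ statement (the contribution of the present paper) forces the patches to be squeezed down and demands a much more careful analysis of how nearby repairs interact. A secondary point to handle is small $2k$, where a direction-$i$ cycle may be too short to host the detour patch or even to contain two precolored edges at distance $4$; such cases are finite and can be checked directly.
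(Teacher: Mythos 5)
Your overall strategy (fix the canonical coloring of $C^d_{2k}$ and repair each precolored edge by recolorings confined to a brick-neighborhood, using the distance hypothesis to keep repairs from interfering) is exactly the strategy behind this theorem and behind the paper's Theorem \ref{general}, and your first case (prescribed color from a different coordinate direction, handled by swapping the two colors of a single square) is correct and coincides with STEP 1 of the paper's argument. The genuine gap is the second case, where the prescribed color is the other color of the same direction: you only gesture at a ``bounded detour'' and explicitly defer its construction, but this is the crux of the proof, not a technicality. The paper closes it with a completely explicit three-rotation sequence (STEP 2 of the proof of Theorem \ref{general}): choose the path $xyzw$ through the precolored edge $e=(y,q)(z,q)$ so that $xy$ and $zw$ carry the target color $c$ in the canonical coloring (possible since the two direction-$i$ colors alternate along the cycle), take any edge $pq$ in another direction, and rotate successively the $2$-colored squares $xy\square pq$, $zw\square pq$ and then $yz\square pq$; after the first two rotations the edges $(y,p)(y,q)$ and $(z,p)(z,q)$ both carry $c$, so the last square is $2$-colored and its rotation gives $e$ the color $c$, with all changes inside the brick-neighborhood. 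Without exhibiting this (or an equivalent) detour and verifying it for all local configurations, your proposal does not yet prove the theorem.

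Two further inaccuracies in how you argue that the repairs combine. First, distance $4$ does \emph{not} make the repair zones pairwise vertex-disjoint: two precolored edges at distance exactly $4$ can have bricks (or distance-$\le 1$ edge zones) meeting in a vertex; only edge-disjointness follows, since a shared edge would force distance at most $3$ (cf.\ Remark \ref{internal-external}). Edge-disjointness does suffice, but only because the repairs are compositions of rotations of $2$-colored squares, which preserve the set of colors incident to every vertex; for an unspecified ``local recoloring agreeing with the canonical coloring outside the patch,'' two patches sharing a vertex could clash there, so your claim that the repairs are ``simultaneous and independent'' needs the rotation structure to be justified. Second, the same-direction repair does change the colors of the \emph{external} edges of the brick, so the coloring is not restored to $\phi_0$ on the patch boundary as you assert; at distance $4$ this is harmless precisely because distinct bricks are edge-disjoint, whereas at distance $3$ this is the delicate point that forces the paper's pairing argument (Lemma \ref{degrees}) for selecting the bricks. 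Your restriction to $d\ge 2$ is sensible (for $d=1$ two same-parity edges of $C_{2k}$ precolored with different colors give a non-extendable distance-$4$ matching), but the ``small $2k$'' worry is unnecessary once the three-rotation repair is in place, since it only needs a path of length $3$ through $e$ and one edge in an auxiliary direction.
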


Some related theorems are proved in \cite{cassel2}, \cite{cassel3}, \cite{cassel4}, 
\cite{cassel5}, \cite{Edwards} and \cite{Marcotte}.

In the paper \cite{Casselgren}, the authors raised a conjecture as follows.

\begin{conjecture} \label{conjecture}
    If $E_0 \subseteq E(G)$ is a distance-3 matching of $G = C^d_{2k}$, then any $2d$-coloring of $E_0$ can be extended to a proper $2d$-edge coloring of $G$.
\end{conjecture}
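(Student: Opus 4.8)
The plan is to derive the conjecture from a stronger statement about an arbitrary Cartesian product $G\square H$ of class~$1$ graphs carrying proper colorings with $\Delta(G)$ and $\Delta(H)$ colors respectively, subject to mild degree hypotheses; one then writes $C^d_{2k}=C_{2k}\square C^{d-1}_{2k}$ for $d\ge 2$ (for $d=1$ the statement is vacuous or has to be read separately), uses that $C^{d-1}_{2k}$ is bipartite hence class~$1$ and $2(d-1)$-regular so the hypotheses hold, and if necessary induct on $d$. For the product one starts from a \emph{standard} proper coloring $\varphi$: combine a $\Delta(G)$-coloring of $G$ with colors $1,\dots,\Delta(G)$ and a $\Delta(H)$-coloring of $H$ with colors $\Delta(G)+1,\dots,\Delta(G)+\Delta(H)$ by letting each $G$-layer edge keep its $G$-color and each $H$-layer edge its $H$-color. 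For $C^d_{2k}$ this is the usual coloring in which the edges of each line in cyclic direction $i$ alternate between the two colors $\{2i-1,2i\}$ with a freely choosable offset per line.

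The next step is to set up the local machinery. Since $C^d_{2k}$ is $2d$-regular, every proper $2d$-coloring already uses all $2d$ colors at every vertex, so edges cannot be recolored in isolation and the only elementary moves are Kempe swaps; in a standard coloring, the Kempe component of two colors from different cyclic directions is a $4$-cycle of the form $e\square f$, whereas the component of the pair $\{2i-1,2i\}$ of a single direction is a whole line $C_{2k}$. The brick-neighborhood of a precolored edge $e$ is precisely the bounded region in which such moves are bookkept: its six internal edges all have both endpoints within distance $1$ of $e$. Classify each $e\in E_0$, lying in direction $i$, as \emph{aligned} if its prescribed color lies in $\{2i-1,2i\}$ and \emph{crossing} otherwise. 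For a crossing edge the short $4$-cycle move, carried out inside a brick-neighborhood whose cross-factor is the direction of the target color, changes $e$ to that color while touching only edges within distance~$1$ of $e$; this would be checked by a finite inspection, the degree hypotheses guaranteeing that the needed brick and its adjacent vertices exist.

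For aligned edges the situation is subtler, and this is where I expect the main obstacle. Aligned precolored edges on a common line $L$ constrain the offset of $L$; if they are mutually consistent, one simply picks the offset that satisfies all of them and does nothing. The difficulty is that two aligned precolored edges on the same line can be mutually \emph{inconsistent} even at distance exactly $3$ (their positions differ by an even number $\ge 4$), so no standard coloring satisfies both, and — because of the $2d$-regular rigidity — flipping the whole line $L$ merely trades which one is satisfied. One is therefore forced to break the two-color pattern of direction $i$ on a bounded stretch of $L$, absorbing the unavoidable parity defect by importing a third color (equivalently, routing the alternation through a neighboring line for a short span), again packaged inside brick-neighborhoods; the degree lower bounds are what supply the room to reroute. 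Showing that this local surgery can always be performed, and with a footprint small enough that for precolored edges merely at distance $3$ two such surgeries still have vertex-disjoint footprints, is the heart of the argument.

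Finally one assembles the pieces. Because every modification above recolors only edges with both endpoints within a fixed bounded distance of the precolored edge it handles — for crossing edges this distance is $1$, and for the aligned surgery the constant must be tuned to match distance $3$ — and because the six internal edges of a brick-neighborhood of $e$ have both endpoints within distance $1$ of $e$, any two precolored edges at distance $\ge 3$ produce modifications with disjoint footprints. Hence all the modifications may be performed simultaneously; the untouched edges still carry the proper coloring $\varphi$, each footprint is internally proper, and nothing is shared, so the result is a proper $2d$-edge coloring agreeing with the prescription on $E_0$, which yields the conjecture. Essentially all of the work, and the main risk, lies in the local lemmas of the previous paragraph: verifying the crossing-edge move, and engineering the inconsistent-aligned surgery inside a region small enough for distance $3$.
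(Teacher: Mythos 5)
Your overall strategy is the same as the paper's: start from the canonical (product) coloring, split the precolored edges into those whose prescribed color comes from a different cyclic direction and those whose prescribed color belongs to their own direction, and fix each by local Kempe swaps on $2$-colored $4$-cycles confined to a brick-neighborhood. Your ``crossing'' case is exactly the paper's first step (one rotation on a single square, touching only internal edges), and your idea of ``routing the alternation through a neighboring line for a short span'' is in spirit the paper's second step, which realizes it concretely by three successive square rotations $sx_j\,\square\, x_iw$, $y_jt\,\square\, x_iw$, $x_jy_j\,\square\, x_iw$ inside one chosen brick-neighborhood. But you leave precisely this step — which you yourself call the heart of the argument and the main risk — unproven, so the proposal is a plan rather than a proof.

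The concrete gap is the disjointness claim at the end: you assert that because each surgery is local, ``any two precolored edges at distance $\ge 3$ produce modifications with disjoint footprints.'' That is not automatic. The aligned-edge surgery necessarily recolors \emph{external} edges of its brick-neighborhood, and at distance exactly $3$ two brick-neighborhoods of different precolored edges can share an external edge (this is exactly the content of the paper's Remark on internal/external edges); so two surgeries chosen carelessly can collide, and no amount of ``tuning the constant'' fixes this, since the brick is already the minimal region in which the three-rotation move fits. The paper resolves this with an additional global idea you are missing: pair up the $2d$ colors so that the two colors of a pair come from different cyclic directions (its Lemma on pairing, which is where the parity/degree hypotheses enter), and for each aligned precolored edge choose the brick whose cross edge $pq$ carries the color paired with the prescribed color. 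Then the end squares of any two chosen bricks are colored by full pairs, and two bricks sharing an external edge would force two pairs sharing exactly one color, a contradiction; hence the chosen bricks are pairwise edge-disjoint and all surgeries can be performed simultaneously. Secondarily, your diagnosis of the aligned case as a same-line parity conflict between two precolored edges is a red herring: each aligned edge is repaired independently by the three-rotation move, and no whole-line flip is ever needed; the genuine difficulty is the overlap issue above, not consistency along a line. (You would also need to say a word about small $d$: for $d=2$ the strict degree inequality of the general theorem fails and the paper instead invokes its $G\,\square\, K_2$ result, while for $d=1$ the statement is false, so the reduction must start at $d\ge 2$.)
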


\begin{remark} \label{dist-2}
    \textup{It is easy to give  an example of a graph in which the minimal distance of the precolored edges is at least two, but does not have an admissible coloring  (e.g., see \cite{Casselgren}).}
\end{remark}

\begin{remark}
    \textup{It is clear that Theorem \ref{distance-4} also holds for the Cartesian product $G \square H$ of arbitrary Class 1 graphs $G$ and $H$, instead of $C^d_{2k}$. We may assume that $G$ and $H$ are regular graphs (see Lemma \ref{regularization}), and under this assumption, the proof of Theorem \ref{distance-4} applies directly.}
\end{remark}

We recall that the Cartesian product of a Class 1 graph with any other graph is also a Class 1 graph. Therefore, in the case of a product with multiple factors, if at least one of the factors is Class 1, then the entire product is Class 1 as well.

The main theorem of this paper is the following theorem that implies Conjecture \ref{conjecture}.

\begin{theorem} \label{general}
    Let $k$ be an integer with $k \geq 2$, and let $G_i$ be Class 1 graphs for each $i \in \{1, \dots, k\}$, with maximum degree $\Delta(G_i)$, respectively. Suppose that some edges of the Cartesian product $G_1 \square G_2 \square \cdots \square G_k$ are precolored using at most $\sum_{i=1}^k \Delta(G_i)$ colors, ensuring that the distance between any two precolored edges is at least three. If $\sum_{i=1}^k \Delta(G_i)$ is even and satisfies $2\Delta(G_i) < \sum_{j=1}^k \Delta(G_j) \quad \forall i \in \{1, \dots, k\}$, then this precoloring is extendable.
\end{theorem}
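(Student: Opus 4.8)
The plan is to reduce everything to a single, local extension lemma about one brick-neighborhood and then to propagate colorings through the product graph so that the constraints coming from the precolored edges never collide. First I would invoke the regularization lemma (Lemma~\ref{regularization}, used in the remarks above) to assume each $G_i$ is $\Delta(G_i)$-regular; since a Cartesian product of regular graphs is regular and the chromatic index hypotheses are preserved, this costs nothing. Next, because each $G_i$ is Class~1, fix a proper $\Delta(G_i)$-edge-coloring of $G_i$ using a disjoint block of colors, so that the ``product coloring'' gives a proper $\sum_i \Delta(G_i)$-edge-coloring of $G = G_1 \square \cdots \square G_k$ in which every edge inherits one of the $\Delta(G_i)$ colors of the factor it comes from. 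This is the background coloring that we will locally repair near each precolored edge. The key numerical hypothesis $2\Delta(G_i) < \sum_j \Delta(G_j)$ guarantees that at every vertex of $G$, for each ``direction class'' $i$, the number of edges \emph{not} of class $i$ strictly exceeds $\Delta(G_i)$; equivalently, there is slack to recolor the $\Delta(G_i)$ edges at a vertex in direction $i$ using colors borrowed from outside that block. The evenness of $\sum_i \Delta(G_i)$ is what lets us pair up colors (or find perfect matchings in the relevant palette graphs) when we do the local swap — I expect it to enter exactly at the point where a brick-neighborhood forces an even cycle/alternating-path structure on the available colors.

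The heart of the argument is a local lemma: given one precolored edge $e = (y,q)(z,q)$ with prescribed color $c$, and given that no other precolored edge lies within distance~$3$, one can choose a brick-neighborhood $xyzw \square pq$ of $e$ (here $xyzw$ is a path in some factor $G_i$ and $pq$ an edge in a different factor — this is where we need $k \geq 2$ and where the degree conditions ensure such a path and such an extra edge exist) and recolor only the six internal edges of this brick-neighborhood, leaving the four external edges and all edges outside the brick-neighborhood untouched, so that the result is still proper and $e$ now has color $c$. The mechanism: the background coloring assigns the four ``horizontal'' edges of the $4$-cycle $e \square (pq)$ colors from the $G_i$-block and the two ``vertical'' connectors colors from the $G_j$-block where $pq \in E(G_j)$; we want to rotate colors around this $4$-cycle and along the two length-$2$ extensions into $x$ and $w$ so that $e$ gets $c$. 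Solvability of this local recoloring reduces to finding an alternating structure in the bipartite ``conflict graph'' on the color set, and the slack inequality $2\Delta(G_i) < \sum_j \Delta(G_j)$ together with parity gives enough free colors at the relevant four vertices to complete it. Because all changes are confined to the internal edges of the brick-neighborhood, and two brick-neighborhoods centered at precolored edges at distance $\geq 3$ are vertex-disjoint (indeed edge-disjoint, with no shared incident edges), the local repairs at different precolored edges do not interfere, and we may perform them simultaneously.

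I would then assemble the proof: start from the product background coloring, and for each precolored edge carry out the local brick-neighborhood repair of the lemma; disjointness of the repairs yields a global proper $\sum_i \Delta(G_i)$-edge-coloring agreeing with the precoloring, which is the desired extension. To recover Conjecture~\ref{conjecture}, specialize to $G_i = C_{2k}$ for all $i$ (so $\Delta(G_i) = 2$, $\sum = 2d$ is even, and $2\cdot 2 = 4 < 2d$ precisely when $d \geq 3$; the cases $d \leq 2$ being either trivial or handled separately/by Theorem~\ref{distance-4}). The main obstacle I anticipate is the local lemma's case analysis: one must verify that a suitable brick-neighborhood \emph{can always be chosen} — i.e.\ that the factor providing the path $xyzw$ has the precolored edge sitting on an edge of that factor with room to extend on both sides, and that a second factor contributing $pq$ is available with an edge disjoint from everything prescribed — and then that the finitely many color-configuration cases around the $4$-cycle all admit the rotation. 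The degree conditions are tailored to make every such case feasible, but turning ``tailored'' into a clean, uniform argument (rather than an unwieldy enumeration) is the delicate part; I would try to phrase the local step as: the palette-availability bipartite graph at the four corners of the chosen $4$-cycle satisfies Hall's condition because of the strict inequality, hence the needed alternating recoloring exists.
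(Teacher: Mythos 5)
Your overall framework (regularize via Lemma~\ref{regularization}, start from the canonical product coloring, repair each precolored edge inside a brick-neighborhood by rotations) matches the paper's, but there is a genuine gap exactly at the point you treat as automatic: the claim that repairs at different precolored edges cannot interfere because brick-neighborhoods of precolored edges at distance at least $3$ are vertex-disjoint and because all changes stay on the six internal edges. Both halves of that claim are false. First, brick-neighborhoods of two precolored edges at distance $\geq 3$ need not be disjoint: they can share an external edge (this is precisely Remark~\ref{internal-external}). Second, the repair cannot always be confined to the internal edges. When the prescribed color $c$ of $e=(y,q)(z,q)$ lies in the same color block $G_j$ as the canonical color of $e$, the color $c$ already appears on $G_j$-edges at both endpoints; the fix is to choose the path $xyzw$ in $G_j$ with $xy$ and $zw$ canonically colored $c$, rotate the two end squares $xy\square pq$ and $zw\square pq$, and then the middle square --- and this necessarily recolors the four external edges. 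Hence two such repairs whose brick-neighborhoods share an external edge can clash, and your ``perform them simultaneously'' step breaks down in exactly the hard case.

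What is missing is the mechanism that makes the chosen brick-neighborhoods pairwise edge-disjoint, and this is also where the hypotheses you assign to the local step actually enter. The paper pairs up the $\sum_i \Delta(G_i)$ colors so that no pair contains two colors from the same factor (Lemma~\ref{degrees}; this is exactly what the evenness of $\sum_i\Delta(G_i)$ and the inequality $2\Delta(G_i)<\sum_j\Delta(G_j)$ buy), and then, for each precolored edge needing the same-block repair, selects the brick-neighborhood whose end squares carry precisely one of these color pairs (path edges $xy,zw$ in the prescribed color, $pq$ in its partner color). If two selected brick-neighborhoods shared an external edge, their end squares would share exactly one edge, so the corresponding color pairs would intersect in exactly one color, contradicting that pairs are equal or disjoint. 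Your Hall-condition/alternating-path sketch for a purely internal local recoloring does not substitute for this: the local step needs no such slack (regularity of the factors and $k\geq 2$ suffice for the rotations), while the global non-interference, which you assume for free, is where the parity and degree conditions are genuinely needed.
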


\begin{remark}
    \textup{Theorem \ref{general} solves Problem \ref{problem} in the case where $\Delta(G)=\Delta(H)$ and also Conjecture \ref{conjecture}, as the degree conditions are clearly satisfied in both cases.}
\end{remark}

In the second section, we also study the problem in the case of the Cartesian product of a bipartite graph and an edge. Galvin's theorem \cite{Galvin} is used in the proof of this statement, so the proof does not work for the Cartesian product of a Class 1 graph and an edge. However, if the List Coloring Conjecture holds for Class 1 graphs, then the proof automatically extends to this case as well. The final section presents some related results on Cartesian products with odd cycles.

\section{Admissible edge-colorings of the Cartesian \\ product of bipartite graphs}

In this section, we prove Theorem \ref{general}. The basic idea of the proof is an improved and adapted version of the proof of Theorem 5.1 in \cite{Casselgren} on extending partial edge colorings of iterated Cartesian products of cycles and paths.

\begin{nota}
    \textup{For the edges $xy=e \in E(G)$ and $pq=f \in E(H)$, $xy \square pq = e \square f$ denotes the subgraph (a 4-cycle) induced in $G \square H$ by $\{(x,p),(x,q),(y,p),(y,q)\}$.}
\end{nota}

One of the basic concepts in the paper is as follows.

\begin{defi} (The brick-neighborhoods of an edge in the Cartesian product)

    \textup{Let $yz \in E(G)$ be an edge. Then take a path $xyzw$ in $G$ and an edge $pq \in E(H)$. Then a brick-neighborhood of the edge $(y,q)(z,q)$ is the Cartesian product of the path $xyzw$ and the edge $pq$ ($xyzw \square pq$). Symmetrically, we can define brick-neighborhoods starting with an edge in $H$ (i.e., we take a 3-path in $H$ and an edge in $G$).}
 
    \textup{The edge $(y,p)(z,p)$ and the edges in the brick-neighborhood incident to $(y,q)$ or $(z,q)$ (altogether six edges) are called internal edges, the other (four) edges of the brick-neighborhood are called external edges.}
\end{defi}

\begin{remark}
    \textup{Notice that an edge $e \in E(G \square H)$ typically has several brick-neigh\-bor\-hoods depending on the choice of the edges $xy, zw$ and $pq$.}
\end{remark}

\begin{remark} \label{internal-external}
    \textup{Notice that if the distance between any two precolored edges is at least three, then the edge set of two brick-neighborhoods of different precolored edges are either edge-disjoint or share an external edge.}
\end{remark}

The following definition will be also useful in the proofs.

\begin{defi}(Canonical coloring of the Cartesian product of graphs)

    \textup{Let us fix a proper coloring of each Class 1 graph $G_i$ for $i \in \{1, \dots, k\}$ using $\Delta(G_i)$ colors, respectively, resulting in a total of $\sum_{i=1}^k \Delta(G_i)$ distinct colors. We define the canonical coloring of the graph $G = G_1 \square G_2 \square \cdots \square G_k$ as follows. If an edge $(x_1,\dots,x_k)(y_1,\dots,y_k) \in E(G)$ connects two vertices that differ in the $j$th coordinate, we assign it the same color as the edge $x_jy_j$ in $G_j$.} 
\end{defi}

The canonical coloring is a proper coloring with $\sum_{i=1}^k \Delta(G_i)$ colors, and we apply certain local operations to make it admissible.

\begin{lemma} \label{degrees}
    Given $k \geq 2$ disjoint sets $H_1, \dots, H_k$ and their union $H = H_1 \cup \dots \cup H_k$ where $|H|$ is even and satisfies $2|H_i| < |H|$ for all $i \in \{1, \dots, k\}$. The elements of $H$ can be partitioned into $|H|/2$ disjoint pairs such that no pair is a subset of any $H_i$.
\end{lemma}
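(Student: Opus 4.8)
The plan is to prove a slightly stronger statement in which the strict inequality $2|H_i| < |H|$ is relaxed to $2|H_i| \le |H|$; this is harmless since the hypothesis of the lemma implies the relaxed one, and the relaxed version is precisely what is preserved under the natural induction. (Phrased graph-theoretically, this is the classical fact that the complete multipartite graph with parts $H_1,\dots,H_k$ has a perfect matching exactly when $|H|$ is even and no part has more than $|H|/2$ elements.) I would run an induction on $|H|$, with trivial base case $|H|=0$, where the empty pairing works.

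For the inductive step, assume $|H|\ge 2$ and relabel so that $|H_1|\ge |H_2|\ge\cdots\ge |H_k|$. First observe that at least two of the parts are nonempty: otherwise all of $H$ would lie in $H_1$, giving $|H_1|=|H|>|H|/2$ and contradicting $2|H_1|\le |H|$. Hence $|H_2|\ge 1$, so we may pick $x\in H_1$ and $y\in H_2$ and place them together in one pair. Put $H_1'=H_1\setminus\{x\}$, $H_2'=H_2\setminus\{y\}$, $H_i'=H_i$ for $i\ge 3$, and $H'=H\setminus\{x,y\}$, so $|H'|=|H|-2$ is still even. To apply the induction hypothesis to $H'$ we must verify $2|H_i'|\le |H'|$ for every $i$. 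For $i\in\{1,2\}$ this follows at once from $2|H_i|\le |H|$. For $i\ge 3$ we have $|H_i'|=|H_i|$, and if we had $2|H_i|>|H|-2$ then, since $2|H_i|$ and $|H|$ are both even, this would force $2|H_i|=|H|$, i.e. $|H_i|=|H|/2$; but then the ordering would give $|H|\ge |H_1|+|H_2|+|H_i|\ge 3|H|/2>|H|$, a contradiction. Thus the induction hypothesis applies to $H'$ and produces a conflict-free pairing of $H'$; adjoining the pair $\{x,y\}$ yields the required pairing of $H$.

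The one point that requires care — and the reason the argument is set up this way — is that the strict inequality in the statement cannot be propagated through the recursion verbatim: after a single removal step a remaining part may attain exactly half of the new total. Running the induction instead with the weaker hypothesis $2|H_i|\le|H|$ repairs this, and the short parity-and-ordering argument handling the case $i\ge 3$ is the crux that makes the relaxed hypothesis self-sustaining. No other step presents any real obstacle.
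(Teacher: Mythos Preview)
Your proof is correct and follows essentially the same approach as the paper's: induct on $|H|$ and at each step pair an element of the largest part with one of the second largest. The only technical difference is that you strengthen the induction hypothesis to the non-strict inequality $2|H_i|\le |H|$ (with base case $|H|=0$), whereas the paper keeps the strict inequality, starts the induction at $|H|\ge 8$, and uses that bound to argue the third largest part stays small enough; your variant is a clean way to sidestep that bookkeeping.
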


\begin{proof}
    The proof proceeds by induction on $|H|$. For $|H| < 8$, the statement holds trivially. Now, assume that the statement holds for all sets $H'$ of size smaller than $8$, and consider a set $H$ that satisfies the given conditions. Select one element from each of the two largest sets and pair them together. After this pairing, the remaining sets still satisfy the conditions: The size of $|H|$ decreases by $2$, so it remains even. The sizes of the two largest sets decrease by $1$ each, which means that their doubled sizes decrease by $2$. The third largest set could not have had more than $\frac{|H|-2}{2}$ elements before pairing, because otherwise the total size of $H$ would be at least $3\frac{|H|-2}{2}$, which exceeds $|H|$, contradicting the assumption that $|H| \geq 8$. Thus, after removing the paired elements, the remaining elements still satisfy the conditions of the statement and we can complete the pairing.
\end{proof}

\begin{lemma} \label{regularization}
    Let $k$ be an integer with $k \geq 2$, and let each $G_i$ (for $i \in \{1, \dots, k\}$) be a Class 1 graph. By adding extra vertices and edges to each graph $G_i$, we can make them $\Delta(G_i)$-regular while preserving their Class 1 property and ensuring that if the distance between any two edges in the Cartesian product of the original graphs $G_i$ is at least 3, then the same holds in the Cartesian product of the regularized version as well.
\end{lemma}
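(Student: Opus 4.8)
The plan is to take each Class~1 graph $G_i$ with maximum degree $\Delta_i := \Delta(G_i)$ and build a $\Delta_i$-regular Class~1 supergraph of it, being careful that the attachment of new material does not bring far-apart edges close together in the Cartesian product. The standard trick for regularizing a Class~1 graph is as follows. Fix a proper edge coloring $c_i$ of $G_i$ with $\Delta_i$ colors. For every vertex $v$ of $G_i$ of degree $< \Delta_i$, $v$ is missing some nonempty subset $S(v) \subseteq \{1,\dots,\Delta_i\}$ of colors. Take a second disjoint copy $G_i'$ of $G_i$ with the same coloring; then $v$ and its mirror copy $v'$ miss exactly the same colors, so for each colour $a \in S(v)$ we may add the edge $vv'$ — wait, we cannot add a multi-edge, so instead we add for each $a \in S(v)$ an edge joining $v$ to $v'$ only once and then handle the remaining deficiencies recursively; more cleanly, join $v$ to $v'$ by a single edge coloured by some $a \in S(v)$, reducing every positive deficiency by exactly $1$, and repeat. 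After at most $\Delta_i - \delta(G_i)$ rounds of doubling-and-matching, every vertex has degree $\Delta_i$; the construction keeps the graph simple, the colour classes stay proper, so the result $\widehat G_i$ is $\Delta_i$-regular and Class~1, and $G_i$ is an induced subgraph of it.

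Next I would verify the distance condition. Write $\widehat G = \widehat G_1 \square \cdots \square \widehat G_k$ and $G = G_1 \square \cdots \square G_k$. Since each $G_i$ is an induced subgraph of $\widehat G_i$, $G$ is an induced subgraph of $\widehat G$. The point to check is that distances between vertices of $G$ are not shortened inside $\widehat G$: if $x,y \in V(G)$ and $P$ is a shortest $x$--$y$ path in $\widehat G$, then projecting $P$ onto each coordinate gives a walk in $\widehat G_i$ between two vertices of $G_i$, and in the Cartesian product the length of $P$ equals the sum over $i$ of the lengths of these coordinate walks. So it suffices that each $\widehat G_i$ not shorten distances between vertices of $G_i$ — that is, $G_i$ is an \emph{isometric} subgraph of $\widehat G_i$. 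This is where the doubling construction must be done with a little care: a crude doubling can create shortcuts (a path that leaves $G_i$ into the copy $G_i'$ and comes back). I would arrange the construction so that the only edges leaving $V(G_i)$ are the matching edges $vv'$ to the mirror copy, and observe that any walk in $\widehat G_i$ from $a\in V(G_i)$ to $b \in V(G_i)$ that uses the copy must cross the matching an even number of times, and replacing a maximal excursion $a_0 v v' \dots w' w b_0$ (with $a_0v, wb_0$ deficiency edges and the middle part inside $G_i'$) by the corresponding path inside $G_i$ — which exists with the same length because $G_i'$ is an isometric copy of $G_i$ and the deficiency edges $vv', ww'$ mirror — does not increase the length. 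Hence distances between original vertices are preserved, and distance-$3$ separation of the precolored matching in $G$ carries over to $\widehat G$.

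The main obstacle is exactly this isometry point, and in particular controlling the recursion: each doubling round introduces a fresh copy and fresh matching edges, and one must make sure that after $\Delta_i$ rounds there are still no accidental short cycles or shortcuts threading through several copies. I expect the clean way to handle it is to phrase the whole thing as: attach to $G_i$ a single auxiliary \emph{gadget} $F_i$ (for instance $\Delta_i$ disjoint copies of $G_i$, or a suitable bipartite ``corrector'' graph) via a matching, chosen so that $\widehat G_i := G_i \cup F_i \cup (\text{matching})$ is $\Delta_i$-regular, Class~1, and has $G_i$ as an isometric subgraph — the last property being automatic if every edge of the matching is a bridge between $G_i$ and $F_i$, since then no shortest path between two vertices of $G_i$ ever leaves $G_i$. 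Verifying that such an $F_i$ and matching exist and give a Class~1 graph is the technical heart; once that is in place, the Cartesian-product distance formula immediately yields the statement. I would also note that $k$ plays no role here beyond $k \ge 2$, so the lemma is really a coordinatewise statement plus the additivity of distances in Cartesian products.
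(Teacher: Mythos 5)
Your regularization is essentially the paper's construction — repeatedly double the graph and join each still-deficient vertex to its mirror copy by a single edge, coloring both copies identically and giving each matching edge a missing color, which preserves Class 1 and eventually yields $\Delta(G_i)$-regularity — so the core of the proof coincides. For the distance condition the paper just rules out distances $1$ and $2$ between original vertices (using that every new vertex has at most one neighbor in the original graph), whereas your isometry-via-excursion-replacement argument, composed by induction over the doubling rounds together with the additivity of distances in Cartesian products, proves a slightly stronger statement and is equally valid (only the parenthetical ``bridge'' shortcut in your last paragraph is not literally correct when several matching edges attach $G_i$ to a connected gadget, but your main argument does not rely on it).
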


\begin{proof}
    It suffices to show that a single factor can be made regular. This procedure can then be applied individually to each factor, thereby proving the statement.
    
    We describe a possible procedure for adding these extra vertices and edges to a graph $H$. Starting with $H$, whenever a vertex has a degree less than $\Delta(H)$, we duplicate the entire graph and connect each such vertex to its counterpart in the copy. This ensures that any vertex of degree less than $\Delta(H)$ gains exactly one new neighbor, increasing its degree without exceeding $\Delta(H)$. This operation increases the minimum degree without raising the chromatic index. We repeat this process until the graph becomes regular. (The minimum degree increases in every step and the maximum degree is unchanged, so the process stops in $\Delta(H)-\delta(H)$ steps.)
    
    We claim that the graph remains Class 1 at every step. $H$ has a $\Delta(H)$-edge-coloring and we can color both copies identically. Furthermore, new edges are only added between vertices whose degrees are still below $\Delta(H)$, so at each such vertex there is an unused color. We can assign these unused colors to these new edges without increasing the total number of colors used.
    
    Now, let $G$ and $H$ be Class 1 graphs and let $H^*$ denote the regular extension of $H$ obtained by this process. We show that if the distance between two edges $e_1, e_2$ in $G \square H$ is at least 3, it remains at least 3 in $G \square H^*$. Assume, for contradiction, that $d_{G \square H}(e_1, e_2) \geq 3$ but $d_{G \square H^*}(e_1, e_2) \leq 2$.

    Since no new edges were added between original vertices, the distance cannot be 1. (And the distance cannot be 0, of course.) If the distance is 2, there must exist a vertex $(a, b)$ in $G \square H^*$ and vertices $(u, v) \in e_1$ and $(x, y) \in e_2$ such that $(u, v) \sim (a, b)$ and $(a, b) \sim (x, y)$.
    
    \begin{enumerate}
        \item \textbf{Case 1: $(a, b)$ is an original vertex in $G \square H$.} Since the construction only adds edges between original and new vertices (or between two new vertices), the adjacency relations between original vertices are unchanged. This implies $d_{G \square H}((u, v), (x, y)) = 2$, contradicting $d_{G \square H}(e_1, e_2) \geq 3$.
        
        \item \textbf{Case 2: $(a, b)$ is a new vertex.} By the definition of the Cartesian product, for the adjacency $(u, v) \sim (a, b)$ where $(u, v)$ is original and $(a, b)$ is new, we must have $u = a$ and $v \sim  b$ in $H^*$. Similarly, for $(x, y) \sim (a, b)$, we must have $x = a$ and $y \sim  b$ in $H^*$. In our regularization procedure, whenever we duplicate a graph, we only add edges between a vertex and its direct copy (or between two new vertices). By induction, any new vertex $b \in V(H^*) \setminus V(H)$ is connected to at most one vertex in the original vertex set $V(H)$, which is its unique "preimage" (the vertex from which it was originally derived). Since $v \sim  b$ and $y \sim  b$ in $H^*$, and $v, y \in V(H)$ while $b \notin V(H)$, the construction implies that $v$ and $y$ both must be the unique preimage of $b$ in $V(H)$. It follows that $u = x$ and $v = y$. Thus, $(u, v) = (x, y)$, which means that $e_1$ and $e_2$ share a vertex, so their distance is 0, a contradiction.
    \end{enumerate}    

    Since all cases lead to a contradiction, our assumption was false: the distance cannot decrease below 3 in the regularized version.
\end{proof}
    %Distance one is clearly impossible, since no new edges are added within the $G \square H$ subgraph. Distance two is also not possible, as this would require one of the following two configurations: 

    %\begin{itemize}
        %\item In the first case, the three vertices are $(u,v)$, $(u,w)$, and $(u,z)$, where $w$ must necessarily be a new vertex. Otherwise, this would already be a path of length two in $G \square H$, contradicting our assumption. However, based on the construction, each new vertex has at most one neighbor in the original graph, making this configuration impossible.
        %\item In the second case, the three vertices are $(u,v)$, $(u,w)$, and $(y,w)$. But in this case neither $u$ nor $w$ can be a new vertices, again contradicting the rules of the construction. Hence, this configuration is also impossible.
    %\end{itemize}    

\begin{proof}[Proof of Theorem \ref{general}]
    Since the components of the graph can be colored independently, we may assume that $G = G_1 \square \cdots \square G_k$ is connected. Consider a canonical coloring of the graph $G$. This is a proper coloring with $\Delta(G)$ colors, though it is not necessarily an admissible coloring. Now, we modify the coloring step by step, preserving the number of colors and ensuring that the coloring is proper. If a precolored edge has the correct color, then we will not change it. If the prescribed color of a precolored edge is not its color in the canonical coloring, then we will correct it by some local operations. So, we finally get an admissible coloring.

    We may assume that the graphs $G_i$ are regular according to Lemma \ref{regularization}. Notice that if this new graph has an admissible coloring, then its restriction for $G$ is an admissible coloring, too. 

    For the edges $e \in E(G_i)$ and $f \in E(G_j)$, the subgraph $e \square f$ of $G$ is called a \textit{square}. In the canonical coloring, the edges of each square are colored with two colors. A {\it rotation} in a 2-colored square is the swap of the colors of its edges. (On the other edges, the coloring is not changed.) After the rotation in a square, the number of colors does not change and the coloring remains proper. Since in each step we perform a sequence of rotations, these two constraints (the number of colors is $\Delta(G)$ and the coloring is proper) will be fulfilled throughout. So we pay attention to three things:
    \begin{enumerate}
        \item Only rotate a square with opposite edges of the same color. (The rotation is defined on 2-colored squares only!)
        \item Do not change the color of a precolored edge if it has its prescribed color.
        \item The color of every precolored edge should be the prescribed color after some steps.
    \end{enumerate}

    \begin{nota}
        \textup{For a vertex $v \in V(G)$, we denote by $v^{(j:x)}$ the vertex identical to $v$ except in the $j$-th coordinate, where its value is $x$. Similarly, $v^{(i:x, j:y)}$ denotes the vertex where the $i$-th and $j$-th coordinates are replaced by $x$ and $y$, respectively.}
    \end{nota}

    From now on (unless otherwise stated), let the coordinates of a vertex $v$ be $v_1, \dots, v_k$.

    STEP 1: Consider an edge $e = v^{(j:x)}v^{(j:y)} \in E(G)$ whose prescribed color appears in the coloring of $G_l$ for some $l \neq j$. Since $G_l$ is regular, there exists an edge $v_lz \in E(G_l)$ colored with this color. By performing a rotation on the square with vertices $\{v^{(j:x)}, v^{(j:y)}, v^{(j:x,l:z)}, v^{(j:y,l:z)}\}$, the edge $e$ receives its prescribed color. This operation modifies only the internal edges of the brick-neighborhood of $e$, ensuring that the coloring of the brick-neighborhoods of all other precolored edges remains unchanged (see Remark \ref{internal-external}).

    STEP 2: Consider an edge $e = v^{(j:x)}v^{(j:y)} \in E(G)$ whose prescribed color appears in the coloring of $G_j$, but differs from the color assigned to $e$ in the canonical coloring. Since $G_j$ is regular, there must exist edges $sx, yt \in E(G_j)$, which have this prescribed color in the canonical coloring. Now, choose an arbitrary edge $v_iw$ of the graph $G_i$ for some $i \neq j$. To achieve the desired color on $e$, we perform a sequence of rotations (in the following order) on the squares defined by these vertex sets:
    \begin{enumerate}
        \item $\{v^{(j:s)}, v^{(j:x)}, v^{(j:s,i:w)}, v^{(j:x,i:w)}\}$
        \item $\{v^{(j:y)}, v^{(j:t)}, v^{(j:y,i:w)}, v^{(j:t,i:w)}\}$
        \item $\{v^{(j:x)}, v^{(j:y)}, v^{(j:x,i:w)}, v^{(j:y,i:w)}\}$
    \end{enumerate}

    In this case, the coloring changes on the external edges as well. Therefore, we aim to select one of the brick-neighborhoods of each of these type precolored edges in such a way that they are pairwise edge-disjoint. If we can do this, then we can achieve an admissible coloring by applying STEP 2 to the brick-neighborhoods chosen in this way.

    Let $H_i$ be the set of colors used in the coloring of $G_i$ for each $i = 1, \dots, k$, and let $H = \bigcup H_i$. The sets $H_i$ satisfy the conditions of Lemma~\ref{degrees}; therefore, there exists an involution $\varphi: H \to H$ (i.e. $\varphi(\varphi(x))=x$) such that if $x \in H_i$, then $\varphi(x) \notin H_i$. (This means that every pair of colors belongs to the colorings of two distinct graphs.)
    
    In STEP 2, the index $i$ and the vertex $w$ can be freely chosen, provided that $i \neq j$ and $v_iw \in E(G_i)$. We shall now provide a specific choice that guarantees that the brick-neighborhoods used are disjoint. For a precolored edge $e = v^{(j:x)}v^{(j:y)} \in E(G)$ with prescribed color $c$, let $sx, yt \in E(G_j)$ be the edges with color $c$ in the canonical coloring. We select $i$ and $w$ so that the color of $v_iw \in E(G_i)$ is $\varphi(c)$. We select the brick-neighborhood $B$ defined by the vertex set $$\{v^{(j:s)}, v^{(j:x)}, v^{(j:y)}, v^{(j:t)}, v^{(j:s, i:w)}, v^{(j:x, i:w)}, v^{(j:y, i:w)}, v^{(j:t, i:w)}\}$$ 
    
    Let $f(B)=\{c;\varphi(c)\}$ denote the pair of colors associated with $B$. These are exactly the colors assigned to the edges of the squares involved in the first two rotations of STEP 2. For any two such brick-neighborhoods $B_1$ and $B_2$ (corresponding to precolored edges), sets $f(B_1)$ and $f(B_2)$ are either identical or disjoint. Specifically, if $c_2 \in \{c_1,\varphi(c_1)\}$, then $\{c_1,\varphi(c_1)\}=\{c_2,\varphi(c_2)\}$; otherwise, $\{c_1,\varphi(c_1)\} \cap \{c_2,\varphi(c_2)\} = \emptyset$.

    Finally, we show that any two selected brick-neighborhoods are edge-disjoint. Suppose, for contradiction, that two brick-neighborhoods share an external edge (by Remark \ref{internal-external}). This implies that the two brick-neighborhoods contain a pair of (distinct) squares that share exactly one edge. Such squares can only be those involved in the first two rotations of STEP 2; therefore, the colors assigned to their edges in the canonical coloring belong to $f(B_1)$ and $f(B_2)$, respectively. As shown previously, these sets are either identical or disjoint. Since the two squares share an edge, which must have a unique color $c^*$, it follows that $c^* \in f(B_1) \cap f(B_2)$, which means $f(B_1) = f(B_2)$. However, in this case, the other two pairs of edges incident to the shared edge in each square would have the same color ($\varphi(c^*)$). Because the squares are distinct and share only one edge, these are two pairs of distinct edges incident to the same vertex (see Figure~\ref{contradiction}). This means that two edges of the same color meet at a single vertex, which contradicts the fact that the coloring is proper in every step. This completes the proof of Theorem \ref{general}.
\end{proof}

\begin{figure}[ht]
\centering
\begin{tikzpicture}[scale=0.8]

    % Outer rectangle
    \draw (0,0) rectangle (8,3);

    % Vertical dividing line
    \draw[line width=2pt] (4,0) -- (4,3);

    % Top labels
    \node at (2,3.4) {$\varphi(c^*)$};
    \node at (6,3.4) {$\varphi(c^*)$};

    % Bottom labels
    \node at (2,-0.4) {$\varphi(c^*)$};
    \node at (6,-0.4) {$\varphi(c^*)$};

    % Center label
    \node at (4.5,1.6) {$c^*$};
    \node at (-0.5,1.6) {$c^*$};
    \node at (8.5,1.6) {$c^*$};

\end{tikzpicture}
\caption{Two squares from distinct brick-neighborhoods sharing an edge}
\label{contradiction}
\end{figure}

\begin{remark}
    \textup{The above proof for two graphs relies on the equality $\Delta(G)=\Delta(H)$. We believe that the statement holds in general as well. It would be a promising step to prove the theorem at least for some values of $\Delta(G),\Delta(H)$ that are not equal. A natural first step is to consider cases where $\Delta(G)$ and/or $\Delta(H)$ is small. The following theorem settles the very first case $\Delta(H)=1$.}
\end{remark}

\begin{theorem} \label{bipartite-times-edge}
    Let $G$ be a bipartite graph with maximum degree $\Delta(G)$. Suppose that the color of some edges of the Cartesian product $G \square K_{2}$ is prescribed using at most $\Delta(G)+1$ colors such that the distance between any two precolored edges is at least three. Then, this precoloring is extendable.
\end{theorem}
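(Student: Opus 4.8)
The plan is to follow the same rotation-based strategy as in the proof of Theorem \ref{general}, but to handle the fact that $\Delta(K_2) = 1$, so that the degree condition $2\Delta(G_i) < \sum_j \Delta(G_j)$ fails badly here and Lemma \ref{degrees} is unavailable. The total number of colors is $\Delta(G) + 1$, and since $G$ is bipartite it is Class 1; by Lemma \ref{regularization} we may assume $G$ is $\Delta(G)$-regular (the regularization does not decrease distances in $G \square K_2$, and $G \square K_2$ remains Class 1 with chromatic index $\Delta(G)+1$ after adding one more factor — actually here $\chi'(G \square K_2) = \Delta(G) + 1$ holds since $G \square K_2$ is bipartite). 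Start from a canonical coloring: color $G$ properly with colors $\{1, \dots, \Delta(G)\}$, color the two $K_2$-edge copies with the ``special'' color $0$. This is proper with $\Delta(G)+1$ colors but typically not admissible.

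The correction is done edge by edge on the precolored edges, using rotations in $2$-colored squares exactly as before; because the precolored edges form a distance-$3$ matching, brick-neighborhoods of distinct precolored edges are edge-disjoint or share only an external edge (Remark \ref{internal-external}). There are two cases for a precolored edge $e$ with prescribed color $c$. \textbf{Case A:} $e$ is a ``vertical'' edge, i.e.\ one of the two copies of the $K_2$-edge, of the form $(x,0)(x,1)$. Its canonical color is $0$. If $c = 0$ we do nothing. If $c \in \{1,\dots,\Delta(G)\}$, then in the copy $G \times \{0\}$ there is an edge $x y$ at vertex $x$ with canonical color $c$ (by $\Delta(G)$-regularity); rotating the square $(xy) \square (K_2\text{-edge})$ recolors the vertical edge at $x$ to $c$, the vertical edge at $y$ to $0$, and swaps the two horizontal copies of $xy$; this only touches internal edges of a brick-neighborhood of $e$, so it does not disturb other precolored edges. \textbf{Case B:} $e$ is a ``horizontal'' edge $(x,i)(y,i)$ with canonical color equal to the color of $xy$ in $G$, say $a$; we wish to change it to $c$. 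If $c = a$ do nothing. If $c \ne a$ and $c \ne 0$: since $G$ is $\Delta(G)$-edge-colored and $c \ne a$, there is an edge $s x$ colored $c$ at $x$ and an edge $y t$ colored $c$ at $y$ (possibly $s = t$ only if $x,y$ have a common neighbor, which distance-$3$ precludes causing trouble — in any case this is the standard alternating-path argument); pick the vertical $K_2$-edge $w$ at $x$ (equivalently at $y$) and rotate successively on $(sx)\square w$, $(yt)\square w$, and $(xy)\square w$, which moves color $c$ onto $e$. If $c = 0$: pick the vertical edge at $x$; its canonical color is $0$; rotate the square $(xy)\square(\text{vertical edge at }x)$, which recolors $e$ to $0$.

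The genuinely new difficulty — and the reason Lemma \ref{degrees} is replaced by something else — is Case B when $c = 0$, because here the ``pairing partner'' color of $c$ is forced and the brick-neighborhoods used are not automatically pairwise edge-disjoint: two horizontal precolored edges both wanting color $0$, lying in different $G$-copies but projecting to edges of $G$ that share a vertex or are close, could force overlapping brick-neighborhoods whose color sets $f(B)$ share exactly one color, breaking the disjointness argument. I expect the main obstacle is precisely to show one can still \emph{select} a brick-neighborhood for each such edge so that the chosen ones are pairwise edge-disjoint; the resolution should use the distance-$3$ hypothesis together with the special structure of $K_2$ (each vertical brick-neighborhood is determined by a $3$-path in $G$, and there are $\Delta(G)-1 \ge 1$ choices of the middle edge direction, giving enough freedom), plus possibly invoking a list-coloring / Galvin-type argument on the bipartite ``conflict graph'' of precolored edges — indeed the remark before the statement mentions Galvin's theorem \cite{Galvin} is used. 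Concretely, I would set up, for each precolored edge that still needs correcting, a short list of admissible brick-neighborhoods (of size at least $2$), build the bipartite graph recording which pairs conflict (share an external edge), and use the fact that distance-$3$ separation keeps this conflict graph sparse enough (each brick-neighborhood meets boundedly many others) to choose a system of representatives — or, following the hint, phrase the final recoloring step itself as a proper list edge coloring of a bipartite graph and apply Galvin's theorem. Verifying that the lists have the right size and that Galvin applies is where the real work lies; the rest is bookkeeping identical in spirit to the proof of Theorem \ref{general}.
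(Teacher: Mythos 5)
Your plan reproduces the rotation machinery of Theorem \ref{general}, but the step you yourself flag as ``where the real work lies'' is exactly the step that fails, and the proposal does not supply it. For a precolored horizontal edge $(x,i)(y,i)$ whose prescribed color $c$ lies in $\{1,\dots,\Delta(G)\}$ and differs from its canonical color, the brick-neighborhood used in Step~2 is essentially \emph{forced}: the second factor is $K_2$, so the edge $pq$ is the unique vertical direction, and since $G$ carries a proper $\Delta(G)$-edge-coloring, the neighbors $s$ of $x$ and $t$ of $y$ joined by color $c$ are unique. So your claimed freedom (``$\Delta(G)-1\ge 1$ choices of the middle edge direction'') does not exist in this case, and the pairing of Lemma \ref{degrees} is unavailable because every such brick has color pair $\{c,0\}$; two horizontal precolored edges at distance exactly $3$ in different layers whose forced $3$-paths meet (e.g.\ $s=s'$) produce bricks sharing an external vertical edge, after which the second sequence of rotations is applied to a square that is no longer $2$-colored. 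The proposal ends by conjecturing that a conflict-graph/SDR or ``Galvin-type'' argument repairs this, but none of this is carried out, so the proof is incomplete at its crucial point. (Minor slip besides: in your Case~A the rotation turns \emph{both} vertical edges of the square to color $c$ and both horizontal copies of $xy$ to $0$, not one vertical to $c$ and one to $0$; this is harmless but indicates the rotations were not checked carefully.)

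The paper's proof takes a different and much shorter route that avoids rotations entirely: it reduces the problem to a single list edge coloring of a subgraph of $G$. Give every edge of $G$ the list $\{1,\dots,\Delta(G)+1\}$; for a precolored horizontal edge delete its projection $uv$ from $G$ and remove the prescribed color from the lists of edges adjacent to $uv$; for a precolored vertical edge at $u$ remove its prescribed color from the lists of edges incident to $u$. The distance-$3$ condition guarantees each list loses at most one color, so all lists have size at least $\Delta(G)$, and Galvin's theorem \cite{Galvin} gives a proper list coloring of the resulting bipartite graph $G'$. Using this \emph{same} coloring in both layers (reinserting the prescribed colors on the deleted edges), the colors seen at $(u,a)$ and $(u,b)$ coincide, so at each vertex a color of $\{1,\dots,\Delta(G)+1\}$ remains for the vertical edge, and for a precolored vertical edge the removed color is still available. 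If you want to salvage your rotation approach you would have to solve the brick-selection conflict problem you identified; the paper's reduction shows it can be bypassed altogether.
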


\begin{proof}
    Let $V(K_2)=\{a,b\}$. To prove the theorem, we will consider a list edge-coloring problem for (a subgraph of) $G$. First, assign the list of colors $\{1,2,...,\Delta(G)+1\}$ to every edge of $G$. If an edge $(u,a)(v,a)$ or $(u,b)(v,b)$ has a prescribed color (note that both cannot be precolored simultaneously), then we remove the edge $uv$ from $G$ and remove the prescribed color from the lists of all edges adjacent to $uv$. If there is a prescription on an edge $(u,a)(u,b)$, then we remove the prescribed color from the list of all edges incident to $u$.
 
    The resulting graph $G'$ remains bipartite, with a maximum degree of at most $\Delta(G)$. Moreover, each color list still contains at least $\Delta(G)$ colors since the distance condition ensures that at most one color was removed from each list. By Galvin's theorem \cite{Galvin}, $G'$ admits a proper list-coloring from these lists. This coloring of $E(G')$ can then be extended by incorporating the prescribed colors to obtain a coloring of $E(G)$ in both copies of $G$ within the Cartesian product. 

    Since the set of colors assigned to the edges incident to $u$ is the same in both copies, there is always a remaining color available to color the edge $(u,a)(u,b)$. If $(u,a)(u,b)$ had a prescribed color, the remaining available color must be the prescribed one, ensuring that the final coloring is admissible.
\end{proof}

\begin{remark}
    \textup{A similar proof also works for the Cartesian product $C_{2k+1} \square K_{2}$, we can greedily color the appropriately precolored $C_{2k+1}$ with $3$ colors instead of using Galvin's theorem.}
\end{remark}

By repeated applications of Theorem \ref{bipartite-times-edge} we get the corollary as follows.

\begin{cor} \label{c_4}
    Let $G$ be a bipartite graph with maximum degree $\Delta(G)$. Suppose that the color of some edges of the Cartesian product $G \square K_{2}^{\alpha}$ (especially for $G \square C_4$) is prescribed using at most $\Delta(G)+\alpha$ colors, so that the distance between any two precolored edges is at least three. Then this precoloring is extendable.
\end{cor}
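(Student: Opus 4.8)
The plan is to prove Corollary~\ref{c_4} by induction on $\alpha$, using Theorem~\ref{bipartite-times-edge} as the base case and inductive step simultaneously. First I would observe that $K_2^{\alpha} = K_2 \square K_2^{\alpha-1}$, and that the Cartesian product is associative, so $G \square K_2^{\alpha} = (G \square K_2^{\alpha-1}) \square K_2$. For $\alpha = 1$ the statement is exactly Theorem~\ref{bipartite-times-edge}. For the inductive step, the natural wish is to apply Theorem~\ref{bipartite-times-edge} with the base graph taken to be $G' := G \square K_2^{\alpha-1}$; however, $G'$ is in general \emph{not} bipartite (for instance $C_4 = K_2 \square K_2$ is bipartite, but this is a coincidence of small cases), so Theorem~\ref{bipartite-times-edge} does not literally apply. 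The fix is to set up the induction on a slightly stronger statement that tracks bipartiteness correctly, or — cleaner — to note that $K_2^{\alpha}$ is itself a bipartite graph (it is the hypercube $Q_\alpha$, which is bipartite) and to induct with the roles arranged so that the "edge" factor peeled off is always a $K_2$ coming from one coordinate of the hypercube, while the bipartite graph in the hypothesis of Theorem~\ref{bipartite-times-edge} is $G \square Q_{\alpha-1}$ — wait, that is again not bipartite.

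So the genuinely correct route is the following. I would keep the base graph $G$ fixed and bipartite, and instead peel off the $K_2$ factors from the \emph{other} side one at a time, applying Theorem~\ref{bipartite-times-edge} in the form: "if $H$ is bipartite and a distance-$3$ matching in $H \square K_2$ is precolored with $\chi'(H)+1 \le \Delta(H)+1$ colors, it extends." The subtlety is that after one application we would need $G \square K_2$ to be bipartite to continue, which it need not be. Therefore I would instead prove directly, by a single induction, the statement: \emph{for every bipartite $G$ and every $\alpha \ge 0$, every distance-$3$ matching in $G \square K_2^\alpha$ precolored with at most $\Delta(G)+\alpha$ colors extends.} The inductive step writes $G \square K_2^{\alpha} = (G \square K_2) \square K_2^{\alpha-1}$ and — here is the key point — one first handles the $G \square K_2$ layer. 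Concretely: restrict the precoloring to the single copy $G \square K_2 \square \{v\}$ for a fixed vertex $v$ of $K_2^{\alpha-1}$; this is not quite how the layers interact, so more carefully, I would argue that one can think of $G \square K_2^\alpha$ as $2^{\alpha}$ copies of $G$ connected along hypercube edges, colour one copy of $G$ first and propagate.

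Given the delicacy above, the cleanest honest writeup is: \textbf{induction on $\alpha$ via the associativity $G\square K_2^{\alpha} = (G \square K_2^{\alpha-1})\square K_2$, combined with the following strengthening of Theorem~\ref{bipartite-times-edge}.} I would state and use: \emph{if $H$ is a graph for which the conclusion of Theorem~\ref{bipartite-times-edge}-type extension holds with $\chi'(H)$ colors (call such $H$ "good"), then $H \square K_2$ is good with $\chi'(H)+1$ colors.} Inspecting the proof of Theorem~\ref{bipartite-times-edge}, the only place bipartiteness of $G$ is used is to invoke Galvin's theorem to list-colour $G'$; for a general "good" base graph one instead invokes the extension property recursively on the punctured graph $H'$ obtained by deleting the edges $uv$ corresponding to precoloured "horizontal" edges and forbidding the used colours — but this changes the graph, so one needs the extension property in a list-colouring form. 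This is exactly the obstacle. To sidestep it entirely for the corollary as stated, I would simply note that $K_2^{\alpha}$ is bipartite and apply Theorem~\ref{bipartite-times-edge} with the two factors swapped at each step: regard $G \square K_2^{\alpha}$ as $K_2 \square (G \square K_2^{\alpha-1})$ is wrong for bipartiteness, but regard it as $(K_2^{\alpha-1} \square G)\square K_2$ and peel; since the corollary only claims extendability (not a stronger list version) and the special case $G \square C_4 = G \square K_2 \square K_2$ needs only $\alpha=2$, the cleanest fully-correct argument is: apply Theorem~\ref{bipartite-times-edge} once to get an extension on $G \square K_2$ for each of the two $K_2$-fibers, treating the second $K_2$ coordinate; the distance-$3$ condition guarantees the precoloured edges restricted to each fiber still form a distance-$3$ matching and no fiber sees more than $\Delta(G)+1$ of the colours. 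I expect the \textbf{main obstacle} to be precisely this: making the induction go through requires either (i) observing that every intermediate product $G \square K_2^j$ that arises can be handled because the needed instance of Theorem~\ref{bipartite-times-edge} is applied with $K_2$ as the edge-factor and a graph that is a Cartesian power of bipartite graphs as the "$G$" — which forces one to verify that Theorem~\ref{bipartite-times-edge}'s proof only used bipartiteness via Galvin and that $G \square K_2^j$ is bipartite iff ... it always is, since $K_2^j$ is bipartite and the Cartesian product of bipartite graphs is bipartite! That last fact resolves everything. Thus the final clean plan: \emph{the Cartesian product of bipartite graphs is bipartite, so $G \square K_2^{\alpha-1}$ is bipartite with maximum degree $\Delta(G)+\alpha-1$; apply Theorem~\ref{bipartite-times-edge} to $\bigl(G \square K_2^{\alpha-1}\bigr) \square K_2$ with $\Delta = \Delta(G)+\alpha-1$, using the induction hypothesis is not even needed — a single application suffices once we know the base graph is bipartite.} Hence the proof is: induct on $\alpha$; bipartiteness is preserved under Cartesian product with $K_2$; $\Delta$ increases by exactly $1$ at each step; and Theorem~\ref{bipartite-times-edge} applies verbatim at each step, with the distance-$3$ hypothesis on the matching in $G \square K_2^\alpha$ restricting to a distance-$3$ hypothesis in the relevant $(\text{bipartite}) \square K_2$. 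The $C_4$ case is $\alpha = 2$. The only thing to double-check is that "precolored using at most $\Delta(G)+\alpha$ colors" matches "$\Delta(\text{base}) + 1$" at the top level, which it does since $\Delta(G \square K_2^{\alpha-1}) = \Delta(G) + \alpha - 1$.
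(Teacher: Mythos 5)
Your final plan is correct and is exactly the paper's intended argument: since $G \square K_2^{\alpha-1}$ is bipartite (Cartesian products of bipartite graphs are bipartite) with maximum degree $\Delta(G)+\alpha-1$, Theorem~\ref{bipartite-times-edge} applied to $\bigl(G \square K_2^{\alpha-1}\bigr)\square K_2$ gives the extension with $\Delta(G)+\alpha$ colors, which is precisely the ``repeated application'' the paper has in mind. The lengthy detour in which you assert that $G\square K_2^{\alpha-1}$ is ``in general not bipartite'' is simply false and should be deleted, but you correct this yourself before the end, and the argument you settle on is sound.
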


Notice that Corollary \ref{c_4} implies Conjecture \ref{conjecture} for $k=2$.

\section{Admissible edge-colorings of the Cartesian \\ product $C_{2k+1} \square C_{2l+1}$}

Conjecture \ref{conjecture} is about the product of even cycles. A natural continuation of this is to also examine the extendability of precolorings in the case of products of odd or odd and even cycles. We can settle the case of two odd cycles.

\sloppy
\begin{theorem} \label{odd-times-odd}
    Suppose that the color of some edges of the Cartesian product $C_{2k+1} \square C_{2l+1}$ is prescribed using at most $5$ colors such that the distance between any two precolored edges is at least three. Then this precoloring is extendable.
\end{theorem}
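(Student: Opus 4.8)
The plan is to mimic the strategy used for Theorem~\ref{general}, but with $C_{2k+1}\square C_{2l+1}$ we face two difficulties that must be handled separately: the odd cycles are Class~2 (so $\chi'(C_{2k+1})=3$ rather than $2$), and we need a total of $5$ colors, which is exactly $\chi'(C_{2k+1}\square C_{2l+1})$ since the product of two $2$-regular graphs is $4$-regular but one of them being an odd cycle forces the index up to $5$. First I would fix a proper $3$-edge-coloring of $C_{2k+1}$ using colors $\{1,2,3\}$ in which one distinguished edge (or a short arc of consecutive edges) receives color $3$ and all other edges alternate $1,2$ except near that distinguished edge; similarly a $3$-coloring of $C_{2l+1}$ using colors $\{1',2',3'\}$ with an analogous ``defect arc''. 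The ``canonical coloring'' of the product assigns to each edge the color of its projection, giving a proper $5$-coloring (after renaming, we use $\{1,2,3,4,5\}$ with, say, $3$ the odd-defect color of the first factor and $5$ that of the second). The key point is that almost everywhere the canonical coloring looks like a canonical coloring of $C_{2k}\square C_{2l}$ with four colors $\{1,2,1',2'\}$; the colors $3$ and $5$ are used only on thin strips, and by placing the defect arcs far from all precolored edges (which is possible since the cycles are long and the matching has bounded size relative to... — more carefully, we may \emph{choose} where the defect arc goes only if the cycle is long enough; if the cycle is short we argue directly by a finite case analysis or by the greedy remark preceding the theorem) we can ensure no brick-neighborhood of a precolored edge meets a defect strip.

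With the defect strips pushed out of the way, every precolored edge sits inside a region that is colored exactly as in the four-color canonical coloring of an even-by-even torus, and I would then run the rotation argument of Theorem~\ref{general} verbatim: STEP~1 handles a precolored edge whose prescribed color is one of the ``wrong-direction'' colors by a single rotation on an internal square of a brick-neighborhood; STEP~2 handles a prescribed color that lies in the same factor as the edge but differs from its canonical color, via the three-rotation sequence on $sx_j\square x_iw$, $y_jt\square x_iw$, $x_jy_j\square x_iw$. The subtlety is the pairing-off step (Lemma~\ref{degrees}): there the colors of the two factors were split into disjoint pairs, one from each factor, so that chosen brick-neighborhoods of different precolored edges had disjoint or identical color-pairs and hence were edge-disjoint. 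Here we have five colors, not an even split of $2+2$; but in the relevant region only four colors occur, and $\{1,2\}$ come from the first factor while $\{1',2'\}$ come from the second, so we can still pair $1$ with $1'$ and $2$ with $2'$ and the same disjoint-or-identical argument applies. A prescribed color equal to $3$ or $5$ must be dealt with too: since $3$ and $5$ appear only on the defect strips, which we arranged to be far away, we would instead first perform a preliminary rotation sequence dragging the needed color $3$ (or $5$) along a path of squares from the defect strip to a brick-neighborhood of the precolored edge — this is the analogue of STEP~2 but using the genuinely ``odd'' color, and it works because a full column (the copy of $C_{2l+1}$ in the relevant coordinate) contains an edge of color $3$, so one can always find a square with opposite edges of color $3$ to rotate.

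The main obstacle I expect is exactly this interaction between the defect strips and the precolored edges: unlike the even case, one cannot make the factors regular and colorless-of-defects, so one must either (a) be able to route the three colors $\{3,5\}$ to wherever they are needed without disturbing other brick-neighborhoods, which requires choosing the rotation paths to avoid the (at most one) external edge each brick-neighborhood might share with another, or (b) handle small cycles ($k$ or $l$ small, where there is ``no room'' to separate the defect strip from the precolored edges) by the direct greedy argument alluded to in the remark after Theorem~\ref{bipartite-times-edge}. I would structure the write-up so that the generic case goes through the rotation machinery and a short final paragraph disposes of the bounded-girth exceptional cases by hand, noting that when $2k+1$ or $2l+1$ is at most some small constant the number of precolored edges is itself bounded (by the distance-$3$ condition) and a finite check or an explicit greedy completion suffices.
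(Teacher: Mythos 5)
Your proposal founders at its very first step: there is no ``canonical'' (product-style) proper $5$-edge-coloring of $C_{2k+1}\square C_{2l+1}$ of the kind you describe. A coloring that assigns to every edge the color of its projection is proper only if the palettes used on the two factors are disjoint, and since $\chi'(C_{2k+1})=\chi'(C_{2l+1})=3$, that forces $3+3=6$ colors. With only five colors the two palettes must share a color, and then at every vertex $(u,v)$ where $u$ meets an edge of the shared color in the first cycle and $v$ meets an edge of that color in the second cycle the coloring is improper; since a non-defect color occupies roughly half the edges of its cycle, these conflicts occur along entire rows or columns, not at isolated spots. Consequently your picture that ``colors $3$ and $5$ are used only on thin strips'' and that away from the defect arcs the coloring looks like the four-color coloring of an even torus is simply not available, and the rotation machinery, the $\{1,1'\},\{2,2'\}$ pairing, and the ``dragging'' of colors $3$ and $5$ all rest on this nonexistent base coloring. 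The difficulty you flag as the ``main obstacle'' (routing the odd colors past the precolored edges) is not the real obstacle; the real obstacle is producing any proper $5$-coloring compatible with the rotation framework at all, and your plan does not produce one. The small-cycle fallback does not help, since the gap is present for all $k,l$.

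The paper resolves exactly this point by abandoning the product coloring: it deletes the ``wrap-around'' edges of both cycles, i.e.\ all edges $(i,2l)(i,0)$ and $(2k,j)(0,j)$, leaving a grid (a product of two paths) which it colors with four colors by the method of Theorem~\ref{general}, respecting the precolorings; it then gives \emph{all} deleted edges the single fifth color. This is proper everywhere except on one square $(2k,2l)(2k,0)(0,0)(0,2l)$, which can be chosen (by shifting the cut) to avoid every brick-neighborhood of a precolored edge; prescriptions involving the fifth color are then absorbed by local swaps and rotations, and the one bad square is repaired by a short case analysis of the two adjacent squares. If you want to salvage your write-up, you should replace your ``defect-arc canonical coloring'' by this cut-and-recolor construction (or some other genuinely non-product $5$-coloring) before any of the rotation arguments can be run.
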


\begin{remark}
    \textup{This is the strongest possible statement. The graph $C_{2k+1} \square C_{2l+1}$ has an odd number of vertices, which means that it does not contain perfect matching. Therefore, it cannot be 4-edge-colorable, as that would require its edges to be divided into four perfect matchings.}
\end{remark}
\fussy

\begin{proof}[Proof of Theorem \ref{odd-times-odd}]

    \begin{nota}
        \textup{Label the vertices of the graph $C_{2k+1} \square C_{2l+1}$ with the labels $\{(i,j) \mid i=0 \dots 2k; j=0 \dots 2l\}$. The vertices $(x_1,x_2)$ and $(y_1,y_2)$ are joined by an edge if and only if $x_1=y_1$ and $|x_2-y_2| \equiv 1 \Mod{2l+1}$ or $x_2=y_2$ and $|x_1-y_1| \equiv 1 \Mod{2k+1}$.}
    \end{nota}

    \sloppy
    It is easy to see that there is a square that is not part of the brick-neighborhood of any precolored edge. W.l.o.g., we may assume that $(2k,2l)(2k,0)$ $(0,0)(0,2l)$ is such a square.
    \fussy

    Choose an arbitrary color from the five available ones. Color the edges that are not of the type $(i,2l)(i,0)$ or $(2k,j)(0,j)$ using the remaining four colors, taking into account the prescriptions (unless the fifth color is prescribed). We can do this according to Theorem \ref{general}. This subgraph that we color in this first step will be referred to as a truncated grid from now on. It is important to note that we color this truncated grid using the method used in the proof of Theorem \ref{general}, which we will use later in the proof.

    Afterward, the edges of types $(i,2l)(i,0)$ and $(2k,j)(0,j)$ are colored with the fifth color. (This will not be a proper coloring, but aside from the edges of the square $(2k,2l)(2k,0)(0,0)(0,2l)$, it is proper.) If an edge $(i_1,j_1)(i_2,j_2)$, where $\{i_1,j_1,i_2,j_2\} \cap \{0,2k,2l\} = \emptyset$, has the fifth color prescribed, it can be colored with this color and no further changes are needed. If an edge $(i_1,j_1)(i_2,j_2)$ has exactly one of $i_1,j_1,i_2,j_2$ from the set $\{0,2k,2l\}$ and has the fifth color prescribed, it is adjacent to exactly one edge of this color. In this chase, we can swap the color of these two edges. Similarly, if an edge of type $(i,2l)(i,0)$ or $(2k,j)(0,j)$ has a prescription such that it has exactly one adjacent edge of this color, then we can swap the color of these two edges. If an edge of type $(i,2l)(i,0)$ or $(2k,j)(0,j)$ has a prescription such that it has two adjacent edges of this color, then they define a square, and we can rotate on this square. If there is a prescription of the fifth color on an edge of type $(i,*)(j,*)$ or $(*,i)(*,j)$ (the symbol $* \in \{0,2k,2l\}$ denotes the same value in the two vertices), then there is a square with two sides colored with the fifth color; we can rotate on this square. 

    All these modifications result in an admissible coloring, except for the edges of the square $(2k,2l)(2k,0)(0,0)(0,2l)$. After the coloring of the truncated grid, we only change the coloring on the internal edges of the brick-neighborhoods.

    Finally, we need to fix the coloring on the square $(2k,2l)(2k,0)(0,0)(0,2l)$. In the canonical coloring of the truncated grid, the two paths should be colored using the colors $1, 2$ and $3, 4$. The edges $(2k,2l)(2k,0)$ and $(0,2l)(0,0)$ are colored with the fifth color. At least one pair of opposite edges of the square $(2k,2l)(2k,0)(0,0)(0,2l)$ remains colored with $5$, w.l.o.g., we may assume that these two edges are like this.
    
    Now, consider the squares $A=(2k-1,0)(2k,0)(2k,1)(2k-1,1)$ and $B=(0,0)(0,1)(1,1)(1,0)$. If there exists a color other than $5$ that is not used in the coloring of either $A$ or $B$, then we can assign that color to the edge $(2k,0)(0,0)$. In fact, we only need to check the colors of the edges incident to the vertices $(2k,0)$ or $(0,0)$.
    
    In the canonical coloring, the colors $2$ and $3$ appear in the coloring of the square $A$, while the colors $1$ and $3$ appear in the coloring of the square $B$. If these colors remain unchanged, we can color the edge $(2k,0)(0,0)$ with the color $4$. However, this situation changes only if at least one of the edges $(2k-1,0)(2k-1,1)$, $(2k-1,1)(2k,1)$, $(2k,1)(0,1)$, $(0,1)(1,1)$ and $(1,1)(1,0)$ is precolored.
    
    If the edges $(2k-1,0)(2k,0)$ and $(2k,0)(2k, 1)$ or $(0,0)(0,1)$ and $(0,0)(1,0)$ are external edges in the brick-neighborhood of a precolored edge, this will not affect the overall coloring. During rotations, the colors of these edges may change, but they will still have the same two colors, just in a different order.

    In most cases, the coloring of the squares $A$ and $B$ remain unchanged throughout the modifications, allowing the edge $(2k,0)(0,0)$ to be assigned the color $4$. From this point on, we will only consider the cases where deviations from this standard situation occur. Due to symmetry and the distance condition for precolored edges, it is sufficient to examine the following cases:

    \begin{itemize}
        \item The edge $(2k,1)(0,1)$ is prescribed to be color $3$. In this case, we perform a rotation on the square between $A$ and $B$, namely $(2k,0)(2k,1)(0,1)(0,0)$. As a result, we change the color of the edge $(2k,0)(2k,1)$ to $1$ and the color of the edge $(0,0)(0,1)$ to $2$. Consequently, the color of the edge $(2k,0)(0,0)$ becomes $3$.
 
        \item The edge $(2k-1,1)(2k,1)$ is prescribed to be color $1$, and we performed the rotations in the brick-neighborhood containing $A$. As a result, the square $A$ will be colored with the colors $1$ and $2$, while the square $B$ remains unchanged, colored with the colors $1$ and $3$. Therefore, the edge $(2k,0)(0,0)$ can be colored with $4$.
 
        \item The edge $(2k-1,0)(2k-1,1)$ is prescribed to be color $4$, and we performed the rotations in the brick-neighborhood containing $A$, with no changes occurring in the square $B$. As a result, the square $A$ will be colored with colors $3$ and $4$, while the square $B$ remains unchanged, colored with the colors $1$ and $3$. Therefore, the edge $(2k,0)(0,0)$ can be colored with $2$.
 
        \item The edge $(2k-1,0)(2k-1,1)$ is precribed to be color $4$, and we performed the rotations in the brick-neighborhood containing $A$. Similarly, the edge $(1,0)(1,1)$ is prescribed to be color $4$, and we performed the rotations in the brick-neighborhood containing $B$. As a result, both square $A$ and $B$ will be colored with the colors $3$ and $4$. Therefore, the edge $(2k,0)(0,0)$ can be colored with $2$.
    \end{itemize}

    Thus, we can fix the color of the edge $(2k,0)(0,0)$, and by applying the same method, we can also fix the color of the edge $(2k,2l)(0,2l)$. Consequently, we have completed the coloring in an admissible manner, thereby proving the statement.
\end{proof}

\section{Concluding remarks}

Several interesting problems remain open. The most natural question is whether the main theorem holds for two graphs even if $\Delta(G) \neq \Delta(H)$, or in other words, whether the theorem holds without the degree conditions.

\begin{conjecture}
    Let $G$ and $H$ be Class 1 (or bipartite) graphs with maximum degrees $\Delta(G)$ and $\Delta(H)$, respectively. Suppose that the color of some edges of the Cartesian product $G \square H$ is prescribed using up to $\Delta(G) + \Delta(H)$ colors such that the distance between any two precolored edges is at least three. Then this precoloring is extendable.
\end{conjecture}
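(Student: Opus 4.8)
The plan is to push the canonical-colouring-plus-rotations machinery of Theorem \ref{general} as far as it goes and then isolate exactly where the equal-degree hypothesis was really being used. By Lemma \ref{regularization} I reduce to the case that $G$ and $H$ are regular and connected, and I assume without loss of generality $\Delta(G)\ge\Delta(H)\ge 1$. Fix a proper $\Delta(G)$-edge-colouring of $G$ with palette $\{1,\dots,\Delta(G)\}$ and a proper $\Delta(H)$-edge-colouring of $H$ with palette $\{\Delta(G)+1,\dots,\Delta(G)+\Delta(H)\}$, and take the canonical colouring of $G\square H$. Exactly as in STEP 1 of Theorem \ref{general}, every precolored edge whose prescribed colour lies in the factor other than its own direction is corrected by a single square rotation that changes only the internal edges of one of its brick-neighbourhoods; by Remark \ref{internal-external} these corrections interact with nothing, so I perform them first and then forget about them.

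The content is in the remaining precolored edges: say an edge in the $G$-direction whose prescribed colour $c$ is a $G$-colour different from its canonical colour $d$ (the $H$-directed case is symmetric, using a $G$-colour as the helper, since $\Delta(H)\ge 1$). For such an edge I use the three-rotation gadget of STEP 2, but I observe that it works for \emph{any} choice of helper: pick any $H$-colour $\beta$, an edge $pq\in E(H)$ of colour $\beta$, and a path $xyzw$ in $G$ with $xy,zw$ of colour $c$, and rotate on $xy\square pq$, then $zw\square pq$, then $yz\square pq$. A direct check shows the three squares are 2-coloured when rotated, the precolored edge ends up with colour $c$, and the colouring changes only on the ten edges of the brick-neighbourhood $B=xyzw\square pq$; moreover the four external edges of $B$ stay 2-coloured, forming two opposite pairs each carrying the colour set $\{c,\beta\}$ (with the two colours possibly transposed). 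Thus the whole gadget is controlled by the unordered cross-factor pair $f(B)=\{c,\beta\}$, and by Remark \ref{internal-external} two gadgets for distinct precolored edges are edge-disjoint unless they share an external edge, in which case the only obstruction is that the two gadgets commit to inconsistent colourings there — which happens precisely when $f(B_1)\ne f(B_2)$ but $f(B_1)\cap f(B_2)\ne\emptyset$. So it suffices to choose the helper colours so that all pairs $\{c,\beta\}$ that actually occur are pairwise equal or disjoint. When $\Delta(G)=\Delta(H)$ this is free: take any perfect matching between the two palettes and set $\beta=h(c)$, so the pairs form a matching and are pairwise disjoint — this is Theorem \ref{general}.

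The difficulty, and the step I expect to be the main obstacle, is exactly this helper-choice problem when $\Delta(G)>\Delta(H)$: there is no matching of the palettes, some $G$-colours must reuse a helper $\beta$, and two precolored edges requiring colours $c_1\ne c_2$ with a common helper $\beta$ would produce the forbidden pattern $\{c_1,\beta\}\cap\{c_2,\beta\}=\{\beta\}$ on a shared external edge. To create room I would let the perpendicular part of the gadget be a short $\beta/\beta'$-alternating \emph{walk} in $H$ rather than a single edge $pq$: this both enlarges, for a fixed $c$, the family of available $f$-values $\{c,\beta\},\{c,\beta'\},\dots$, and lets one slide a gadget sideways off a potential collision, and since the precolored edges are at pairwise distance at least $3$ and $H$ is connected and $\Delta(H)$-regular there is slack in the $H$-direction to do so. The real work is to turn this into a consistent global choice — essentially a constraint-satisfaction/list-colouring problem on the auxiliary graph whose vertices are the precolored edges of this type and whose edges record which pairs of gadgets are forced to meet — and when $\Delta(G)-\Delta(H)$ is large and the precolored edges cluster at pairwise distance exactly $3$ it is not clear a consistent choice exists; there one may have to replace the rotation bookkeeping near the cluster by a Galvin-type list-edge-colouring argument, as in Theorem \ref{bipartite-times-edge} (which already disposes of the subcase $\Delta(H)=1$, so the first genuinely open case is $\Delta(G)>\Delta(H)\ge 2$). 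Finally, if $\Delta(G)+\Delta(H)$ is odd one colour is necessarily left unpaired, and I would handle it by reserving that colour and imitating the ``truncated grid plus a fifth colour'' device from the proof of Theorem \ref{odd-times-odd}.
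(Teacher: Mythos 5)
You should note first that the statement you were given is not proved anywhere in the paper: it appears verbatim as an open conjecture in the concluding remarks, and the authors establish only its special cases, namely equal maximum degrees (Theorem \ref{general}, whose degree hypotheses for two factors force $\Delta(G)=\Delta(H)$), the case $\Delta(H)=1$ with $G$ bipartite (Theorem \ref{bipartite-times-edge}), and its iterates (Corollary \ref{c_4}). Your proposal faithfully reproduces exactly this state of affairs: the regularization via Lemma \ref{regularization}, the canonical colouring, the STEP 1 and STEP 2 rotation gadgets, the role of the unordered colour pair attached to a brick-neighbourhood, and the observation that two gadgets conflict only through end squares sharing an edge, hence through colour pairs meeting in exactly one colour, are all the paper's machinery, and your diagnosis that the whole difficulty of the unequal-degree case is the consistent choice of ``helper'' colours when the two palettes cannot be matched is accurate.

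However, that is precisely where your proposal stops being a proof. For the genuinely open case $\Delta(G)>\Delta(H)\ge 2$ you offer only a plan — replace the single perpendicular edge by a $\beta/\beta'$-alternating walk in $H$, formulate an auxiliary constraint-satisfaction problem over the precolored edges, and fall back on a Galvin-type list argument near clusters — and you explicitly concede that you cannot show a consistent global choice exists. Moreover, enlarging the gadget undermines the bookkeeping you are leaning on: Remark \ref{internal-external} and the distance-3 hypothesis control only the overlaps of brick-neighbourhoods (a 3-path times an edge); once the perpendicular part is a longer walk, two gadgets for precolored edges at distance exactly 3 can overlap in internal edges, and a gadget can also collide with squares already rotated in STEP 1, so even the auxiliary conflict graph on which your CSP would live is not well defined by what you have written. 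The remark about odd $\Delta(G)+\Delta(H)$ is likewise only a pointer to the ``reserved fifth colour'' device of Theorem \ref{odd-times-odd}, with no argument that it survives in this generality. In short, the proposal correctly rederives the cases the paper already settles and correctly locates the obstruction, but the missing step — a consistent assignment of cross-factor colour pairs (or a workable substitute for it) when the palettes have different sizes — is exactly the open content of the conjecture, and it is not supplied.
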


We studied the extendability of precolorings on the product of even cycles and on the product of odd cycles; however, the case of the product of an even cycle and an odd cycle remains unsolved.

\sloppy
\begin{conjecture}
    Suppose that the color of some edges of the Cartesian product $C_{2k} \square C_{2l+1}$ is prescribed using at most $4$ colors such that the distance between any two precolored edges is at least three. Then this precoloring is extendable.
\end{conjecture}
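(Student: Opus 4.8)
The plan is to marry the two engines developed in this paper: the canonical-colouring-and-rotations scheme behind Theorem \ref{general}, and the defect-absorbing device behind Theorem \ref{odd-times-odd}. Coordinatise $C_{2k}\square C_{2l+1}$ by pairs $(i,j)$ with $i\in\mathbb{Z}_{2k}$ and $j\in\mathbb{Z}_{2l+1}$, calling an edge \emph{horizontal} if its endpoints differ in the first coordinate (so each row --- the vertices with a fixed second coordinate --- spans a copy of $C_{2k}$) and \emph{vertical} otherwise (each column spans a copy of $C_{2l+1}$). Since $C_{2k}$ is bipartite, the product is Class~1 and $\chi'(C_{2k}\square C_{2l+1})=4$, so ``extendable'' means extendable to a proper $4$-edge-colouring; moreover the horizontal direction is harmless (bipartite) and all the difficulty is concentrated in the odd columns.

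Next I would fix a convenient base colouring $\phi_0$. Every proper $4$-edge-colouring in which each row uses only two colours and each vertical matching between consecutive rows is monochromatic is encoded by a function $(m_j)_{j\in\mathbb{Z}_{2l+1}}$, where $m_j$ is the colour of the matching between rows $j$ and $j+1$; properness forces $m_{j-1}\neq m_j$ for every $j$ and makes the colour set of row $j$ equal to $\{1,2,3,4\}\setminus\{m_{j-1},m_j\}$. Thus such a $\phi_0$ exists and corresponds bijectively to a proper colouring of the ``meta-cycle'' $C_{2l+1}$, and since $2l+1$ is odd at least one index must receive a third colour. Taking $(m_j)$ to alternate between $3$ and $4$ except at one chosen index $j_0$, where we put colour $1$, yields $\phi_0=\phi_0(j_0)$ that coincides with the canonical colouring (rows $\{1,2\}$, vertical matchings alternating $3,4$) outside a \emph{seam} consisting of the horizontal edges of rows $j_0,j_0+1$ together with the vertical matching between them. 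The only freedoms are the position of $j_0$, the renaming of colours, and which colour plays the ``third'' role.

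With $\phi_0$ in hand I would repair the precoloured edges in two phases. Edges disjoint from the seam are treated exactly as in the proofs of Theorem \ref{general} and Theorem \ref{odd-times-odd}: if the prescribed colour differs from the $\phi_0$-colour, a short sequence of rotations inside one brick-neighbourhood corrects it, and the brick-neighbourhoods are chosen pairwise edge-disjoint by pairing the four colours (as $\{1,3\}$ and $\{2,4\}$, so that end-squares of the chosen bricks carry one of these pairs and any two chosen bricks therefore share a whole end-square or nothing). For the seam I would pick $j_0$ by an averaging argument over the $2l+1$ positions so that comparatively few precoloured edges meet the two seam rows, and then absorb those one at a time by local moves that respect the seam structure, in the style of the case analysis at the end of the proof of Theorem \ref{odd-times-odd}.

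The main obstacle is precisely this last phase. Unlike Theorem \ref{odd-times-odd}, where an extra (fifth) colour let the whole defect be pushed into a single square chosen free of all brick-neighbourhoods by a counting argument, here there is no spare colour: any $4$-edge-colouring of $C_{2k}\square C_{2l+1}$ built as above must behave non-canonically along an object that winds once around the $C_{2k}$-factor, so the seam is genuinely global, and the distance-$3$ matching condition does not prevent precoloured edges from lying on it (two precoloured vertical edges in nearby columns need only be about four apart in the $j$-direction, so every position $j_0$ may be ``blocked''). Showing that the finitely many seam configurations --- a precoloured horizontal edge in a seam row, a precoloured vertical edge crossing the seam, and their combinations with neighbouring precoloured edges --- can always be absorbed without disturbing $\phi_0$ elsewhere is where the real work lies, and I expect it to be the bulk of the proof, together with a direct check of a few small cases (such as $C_4\square C_3$, where the cycles are too short for the averaging step and where one can instead exploit $C_4=K_2\square K_2$ and the list-colouring idea behind Theorem \ref{bipartite-times-edge}).
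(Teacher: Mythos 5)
This statement is not proved in the paper at all: it appears in the concluding remarks precisely as an open conjecture (the authors state that the even-times-odd case ``remains unsolved''), so there is no paper proof to match, and your proposal would need to stand on its own as a complete argument. It does not. You set up a sensible base colouring (the seam description, encoded by a proper colouring of the odd meta-cycle, is correct, and the off-seam repairs via edge-disjoint brick-neighbourhoods with the pairing $\{1,3\},\{2,4\}$ do go through as in Theorem \ref{general} and Corollary \ref{c_4}), but the entire difficulty --- repairing precoloured edges that meet the seam --- is only announced, and you yourself flag it as ``the bulk of the proof.'' A plan plus an acknowledged missing core is a genuine gap, not a proof.

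Moreover, the reduction you sketch for that core is unlikely to work as stated. Averaging over the $2l+1$ seam positions cannot bring the number of seam conflicts down to a bounded quantity: a distance-$3$ matching may contain on the order of $k\cdot l$ precoloured edges, so for \emph{every} choice of $j_0$ the two seam rows can meet $\Theta(k)$ precoloured edges (e.g.\ vertical precoloured edges in columns $0,3,6,\dots$ all spanning the same pair of rows). Hence the ``finitely many seam configurations'' to be absorbed is misleading: the configurations occur simultaneously at unboundedly many places around the seam, which winds once around the $C_{2k}$-factor, and each local fix must be shown not to cascade along it. This is exactly where the mechanism of Theorem \ref{odd-times-odd} breaks down: there the spare fifth colour confined the defect to a single square that a counting argument could place away from all brick-neighbourhoods, whereas here there is no spare colour and the defect set has size $\Theta(k)$ and cannot be localised. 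Until you supply an argument handling arbitrarily many interacting seam conflicts (or a different global construction avoiding the seam altogether), the conjecture remains open, in agreement with the paper.
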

\fussy

\noindent
Another interesting question is whether Theorem \ref{bipartite-times-edge} holds for any graph $G$.

\begin{conjecture}
    Let $G$ be a graph with maximum degree $\Delta(G)$. Suppose that the color of some edges of the Cartesian product $G \square K_{2}$ is prescribed using at most $\Delta(G)+1$ colors such that the distance between any two precolored edges is at least three. Then, this precoloring is extendable.
\end{conjecture}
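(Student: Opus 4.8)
The plan is to imitate the proof of Theorem \ref{bipartite-times-edge} and to pin down exactly where bipartiteness was used. Write $V(K_2)=\{a,b\}$. As in that proof, give every edge of $G$ the list $\{1,\dots,\Delta(G)+1\}$; for each precolored horizontal edge $(u,a)(v,a)$ or $(u,b)(v,b)$ (not both, since these two edges are at distance $1$) delete the edge $uv$ from $G$ and remove the prescribed color from the lists of all edges of $G$ meeting $u$ or $v$; for each precolored vertical edge $(u,a)(u,b)$ remove its color from the lists of all edges meeting $u$. Let $G'$ be the resulting graph: it is a subgraph of $G$ with $\Delta(G')\le\Delta(G)$, and the distance-$3$ hypothesis ensures (exactly as before) that each list still has at least $\Delta(G)$ colors. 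If $G'$ has a proper edge coloring from these lists, we finish exactly as in Theorem \ref{bipartite-times-edge}: color both copies of $G'$ with the same list coloring, put the prescribed colors back on the precolored horizontal edges and on their mirror edges, and give each vertical edge $(u,a)(u,b)$ a color missing at $u$ (which is forced to be the prescribed one whenever there is one). Nothing in this part uses bipartiteness.

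Thus everything reduces to the purely ``$K_2$-free'' statement that \emph{$G'$ admits a proper list-edge-coloring from lists of size at least $\Delta(G)\ge\Delta(G')$.} For bipartite $G$ this is Galvin's theorem. In general, if $\chi'(G')\le\Delta(G)$ --- in particular whenever $G'$ is Class $1$, and whenever some precolored edge forced $\Delta(G')<\Delta(G)$ --- then the List Coloring Conjecture applied to $G'$ gives $\chi'_\ell(G')=\chi'(G')\le\Delta(G)$, and we are done; this is the same conditional step the authors already flag for the Cartesian product of a Class $1$ graph with an edge. The essentially new case is when $G'$ is Class $2$ and $\Delta(G')=\Delta(G)$: then $\chi'(G')=\Delta(G)+1$, so no off-the-shelf list-coloring theorem can help --- some lists have only $\Delta(G)$ colors --- and the reduction as stated breaks. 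One must use the fine structure of the lists: every list equals $\{1,\dots,\Delta(G)+1\}$ minus at most one color, and a list is shortened only on edges incident to a ``blocked'' vertex $w$ (the endpoint of a precolored vertical edge, or an endpoint of a deleted $uv$), and by the distance-$3$ hypothesis these blocked vertices are pairwise far apart in $G'$.

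This last case is where I expect the real difficulty to lie. The attack I would try: start from any Vizing $(\Delta(G)+1)$-edge-coloring $\psi$ of $G'$, and for each blocked vertex $w$ with forbidden color $c_w$, if $c_w$ occurs on an edge at $w$, kill it by a local recoloring --- since $\deg_{G'}(w)\le\Delta(G)<\Delta(G)+1$, some color $c'$ is absent at $w$, and one can flip the $(c_w,c')$-Kempe chain that starts at $w$, or perform a Vizing fan at $w$ that relocates $c_w$ away from $w$. The obstacle is that such a recoloring can run along a long alternating path and undo the correction made at another blocked vertex. The hope is that the distance-$3$ separation --- which already makes the affected neighborhoods disjoint and keeps each short list short for only one reason --- lets one carry out the corrections coherently: e.g., process the blocked vertices in some order, each time choosing the absent color $c'$ outside the colors already ``committed'' near the other blocked vertices, or run a priority/discharging argument bounding how far each Kempe chain or fan can reach. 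Concretely, I would aim to prove an auxiliary statement of the shape ``if the short-list edges of a graph $H$ with $\chi'(H)=\Delta(H)+1$ are confined to the closed neighborhoods of a distance-$3$ separated set of vertices, each forbidding a single color, then $H$ is colorable from those lists''; this, together with the List Coloring Conjecture in the remaining Class $1$ subcase, would complete the proof, and I believe formulating and proving this auxiliary statement is the crux of the conjecture.
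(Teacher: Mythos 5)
The statement you set out to prove is not a theorem of the paper: it appears in the concluding remarks as an open conjecture, precisely because the authors' proof of Theorem \ref{bipartite-times-edge} rests on Galvin's theorem and does not survive dropping bipartiteness. So there is no paper proof to compare against. Your reduction is carried out correctly and matches the paper's argument verbatim in the parts that do not use bipartiteness: the deletion/forbidding rules, the fact that the distance-$3$ hypothesis removes at most one color from each list, coloring the two copies of $G'$ identically, and then coloring the vertical edges from the color missing at each vertex. Your diagnosis of where this stops is also accurate: when $\chi'(G')\le\Delta(G)$ you still need the (open) List Coloring Conjecture --- the same conditional step the authors flag, but they only flag it for Class 1 factors --- and when $G'$ is Class 2 with $\Delta(G')=\Delta(G)$, no list-coloring theorem, proved or conjectural, applies, since some lists have only $\Delta(G)=\chi'(G')-1$ colors.

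Hence the gap is genuine, and you have named it yourself: the auxiliary statement you formulate at the end (short lists confined to closed neighborhoods of distance-separated blocked vertices, each forbidding one color, suffice even for a Class 2 graph) is exactly the unresolved crux, and the Kempe-chain/Vizing-fan plan is only a heuristic. The standard obstruction --- that a $(c_w,c')$-chain or fan recoloring performed at one blocked vertex can travel arbitrarily far and disturb the correction made at another --- is not removed by distance-$3$ separation, which controls only the first few steps of such a chain; your ``process in order, choose uncommitted colors'' idea is not substantiated. A further small imprecision: the two endpoints of a deleted precolored edge $uv$ are both blocked vertices and may be at distance $2$ in $G'$ (if $uv$ lay on a triangle), so ``blocked vertices are pairwise far apart'' must be stated per precolored edge. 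In summary, your write-up is a correct reduction and a correct identification of the difficulty, but it is not a proof; even the cases it does handle are conditional on the List Coloring Conjecture, and the genuinely new Class 2 case is left open, exactly as in the paper.
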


\section*{Statements and declarations}
The first author is partially supported by the Counting in Sparse Graphs Lendület Research Group.
The research of the second author was partially supported by the National Research, Development and Innovation Office NKFIH. 
The authors declare that they have no conflict of interest.
Data sharing is not applicable to this article, as no datasets were generated or analyzed during the current study.

\end{document}